\documentclass[12pt]{article}
\usepackage{amsmath,amsthm,amssymb,mathtools}
\usepackage{color}
\usepackage{hyperref}
\title{Small ball probability of collision local time for symmetric stable processes}
\author{Minhao Hong and Qian Yu}
\newtheorem{theorem}{Theorem}[section]
\newtheorem{lemma}{Lemma}[section]
\newtheorem{remark}{Remark}[section]
\newtheorem{proposition}{Proposition}[section]
\newtheorem{definition}{Definition}[section]
\newtheorem{corollary}{Corollary}[section]
\numberwithin{equation}{section}
\def\C{\mathbb{C}}
\def\R{\mathbb{R}}
\def\P{\mathbb{P}}
\def\E{{{\mathbb E}\,}}
\def\Re{\mathrm{Re\,}}
\def\Im{\mathrm{Im\,}}
\def\d{\mathrm{d}}
\begin{document}

\maketitle

\begin{abstract}
In this article, the small ball probability is obtained for the collision local time of two independent symmetric $\alpha-$stable processes with parameters $\alpha_1,\alpha_2\in(0,2]$ satisfying $\max\{\alpha_1,\alpha_2\}>1$. The proof is based on obtaining the asymptotic behavior of moment generating function by contour integration.

\vskip.2cm \noindent {\bf Keywords:}  $\alpha-$stable processes; collision local time; small ball probability; contour integration.

\vskip.2cm \noindent {\it Subject Classification: 2020: Primary 60F05, 60J55; Secondary 60G52}
\end{abstract}
\section{Introduction}
Let $X^{\alpha_1}=\{X^{\alpha_1}_t,t\ge0\}$ and $\widetilde{X}^{\alpha_2}=\{\widetilde{X}^{\alpha_2}_s,s\ge0\}$ be two independent symmetric $\alpha-$stable processes in $\R$ with parameters $\alpha_1,\alpha_2\in(0,2]$, respectively. Throughout this paper we fix a complete probability space $(\Omega,\mathcal{F},P, \mathcal{F}_{t})$ such that the processes considered are well-defined on the space. An $\{\mathcal{F}_{t}\}$-adapted process $X=\{X_{t},t\geq0\}$ with all sample paths in $D[0,\infty]$ is said to be an symmetric $\alpha$-stable process  with $\alpha\in(0,2]$ if for any $t> s\geq0$,
\begin{equation}\label{increment}
	\mathbb{E}[e^{\iota u(X_t-X_s)}|\mathcal{F}_{s}]=e^{-(t-s)|u|^{\alpha}}, u\in\mathbb{R},
\end{equation}
where $X_0=0$. For more about symmetric $\alpha-$stable processes, Section 2 of \cite{zsy2024} and references therein are recommended.

Given $T>0$ the collision local time $\widetilde{L}(T)$ of $X^{\alpha_1}$ and $\widetilde{X}^{\alpha_2}$ is defined formally as 
\begin{align}
	\widetilde{L}(T)=\int_{0}^{T}\delta(X^{\alpha_1}_t-\widetilde{X}^{\alpha_2}_t)\d t,
\end{align}
where $\delta$ is the Dirac function in $\R$. 

Since $\delta$ can be approximated by heat kernel
\[p_\varepsilon(x)=\frac{1}{(2\pi\varepsilon)^{\frac{1}{2}}}e^{-\frac{|x|^2}{2\varepsilon}},x\in\R,\]
collision local time $\widetilde{L}(T)$ can be approximated by 
\[\widetilde{L}_\varepsilon(T)=\int_{0}^{T}p_\varepsilon(X^{\alpha_1}_t-\widetilde{X}^{\alpha_2}_t)\d t\]
as $\varepsilon\downarrow0$. If $\widetilde{L}_\varepsilon(T)$ converges to a random variable in $L^p(p\ge1)$ as $\varepsilon\downarrow0$, it is said that the collision local time $\widetilde{L}(T)$ exists in $L^p$. Collision local time is usually used in the modeling of particle systems (see \cite{bms2024} for instance) and population dynamics (see \cite{ep1998} for instance). For more properties about collision local time, \cite{jw2007}, \cite{g2017}, \cite{syc2012} and references therein are recommended.

\begin{remark}\label{rem 1.1}
	The collision local time of $X^{\alpha_1}$ and $\widetilde{X}^{\alpha_2}$ exists in $L^2$ when $\max\{\alpha_1,\alpha_2\}>1$. Using Fourier transform, there exists
	\begin{align}\label{fourier}
		\widetilde{L}_\varepsilon(T)=\int_{0}^{T}p_\varepsilon(X^{\alpha_1}_t-\widetilde{X}^{\alpha_2}_t)\d t=\frac{1}{2\pi}\int_{0}^{T}\int_{\R}e^{-\frac{\varepsilon}{2}|u|^2}e^{\iota u\cdot (X^{\alpha_1}_t-\widetilde{X}^{\alpha_2}_t)}\d u\d t.
	\end{align}
	Then for any $\varepsilon>0$ and $\eta>0$, 
	\begin{align*}
		\E[\widetilde{L}_\varepsilon(T)-\widetilde{L}_\eta(T)]^2&=\frac{2}{(2\pi)^{2}}\int_{\{0<t_1<t_2<T\}}\int_{\R^{2}}\Big(e^{-\frac{\varepsilon}{2}(|v_1-v_2|^2+|v_2|^2)}-e^{-\frac{\eta}{2}(|v_1-v_2|^2+|v_2|^2)}\Big)	
		\\&\times\exp\Big(-(|v_1|^{\alpha_1}+|v_1|^{\alpha_2})t_1-(|v_2|^{\alpha_1}+|v_2|^{\alpha_2})(t_2-t_1)\Big)\d v_1\d v_2\d t_1\d t_2
	\end{align*}
    from \eqref{increment} and \eqref{fourier}. 
	Since $\big|e^{-\frac{\varepsilon}{2}(|v_1-v_2|^2+|v_2|^2)}-e^{-\frac{\eta}{2}(|v_1-v_2|^2+|v_2|^2)}\big|\le1$ and 
	\begin{equation*}
		\begin{split}
			&\int_{\{0<t_1<t_2<T\}}\int_{\R^{2}}\exp\Big(-(|v_1|^{\alpha_1}+|v_1|^{\alpha_2})t_1-(|v_2|^{\alpha_1}+|v_2|^{\alpha_2})(t_2-t_1)\Big)\d v_1\d v_2\d t_1\d t_2\\&\le\int_{\{0<t_1<t_2<T\}}t_1^{-\frac{1}{\max\{\alpha_1,\alpha_2\}}}(t_2-t_1)^{-\frac{1}{\max\{\alpha_1,\alpha_2\}}}\d t_1\d t_2\cdot\Big(\int_{\R}e^{-|v|^{\max\{\alpha_1,\alpha_2\}}}\d v\Big)^2<+\infty,
		\end{split}
	\end{equation*}
	there exists $\lim\limits_{\varepsilon,\eta\downarrow0}\E[\widetilde{L}_\varepsilon(T)-\widetilde{L}_\eta(T)]^2=0$ from the dominated convergence theorem, which means $\widetilde{L}_\varepsilon(T)\stackrel{L^2}{\to}\widetilde{L}(T)$ as $\varepsilon\downarrow0$. Moreover, $\widetilde{L}(T)$ is a non-negative random variable because of the fact $\widetilde{L}_\varepsilon(T)\ge0$ at this time.
	\end{remark}
    
In this article, the small ball probability of $\widetilde{L}(T)$, the collision local time for two independent symmetric $\alpha-$stable processes will be considered. Small ball probability, concerned with the asymptotic decay of $\mathbb{P}(Z \leq \varepsilon)$ as $\varepsilon \downarrow 0$ for a non-negative random variable $Z$, constitute a fundamental problem in probability theory and functional analysis. \cite{ls2001} is a survey of general development in this direction. Beyond their intrinsic mathematical interest in small deviation theory and embedding theorems, such estimates have profound implications in statistical physics (e.g., polymer models and critical phenomena), quantum field theory (via path integral regularization), and even modern machine learning (through concentration of measure in high dimensions). When $Z$ represents a local time or its variants, the small ball probability quantifies the likelihood of rare events such as ``almost no interaction'' between stochastic particles---a scenario of direct relevance to systems exhibiting weak coupling or sparse collisions.

In the classical Gaussian setting where processes are standard Brownian motions ($\alpha= 2$), the small ball behavior of local times and its variants has been investigated by some researchers. \cite{cs2025} shows that the small ball probability of intersection local time for Brownian motions satisfies
\begin{equation}\label{power}
c\varepsilon^{\frac23}\le P\Big(\int_0^1\int_0^1\delta(B_r-\tilde{B}_s)\d r\d s\le\varepsilon\Big)\le C\varepsilon^{\frac23}
\end{equation}
for some positive constants $c$ and $C$, where $B$ and $\tilde{B}$ are independent $1-$dimensional Brownian motions. The small ball probability of self-intersection local time of Brownian motion satisfies
\begin{equation}\label{exponent}
\lim_{\varepsilon\to0^+}\varepsilon^2\log P\Big(\int_0^1\int_0^1\delta(B_r-B_s)\d r\d s\le\varepsilon\Big)=-c
\end{equation}
for some positive constant $c$, which comes from Proposition 1 of \cite{khh1997} and the de Bruijn's Tauberian theorem (Theorem 4.12.9 of \cite{bgt1989}).

%In the classical Gaussian setting where both processes are standard Brownian motions ($\alpha_1 = \alpha_2 = 2$), the small ball behavior of collision local times has been thoroughly investigated. Seminal works by LeGall(1987) and Bass and Khoshnevisan (1993) leveraged the strong Markov property and potential-theoretic techniques to derive precise asymptotics. These results not only highlight the regenerative structure unique to Brownian paths but also serve as a benchmark for non-Gaussian generalizations.

However, many real-world diffusion phenomena—ranging from turbulent flows and financial asset returns to anomalous transport in disordered media—exhibit heavy-tailed increments and long-range dependence that defy Gaussian modeling. Symmetric $\alpha$-stable processes ($0 < \alpha < 2$) provide a natural and flexible alternative, capturing jump discontinuities and self-similarity inherent in such systems. Despite their irregular sample paths, these processes offer a more realistic framework for studying intermittent interactions or rare encounters in complex environments.

This article extends the small ball analysis from the Gaussian paradigm to the non-Gaussian symmetric stable processes. Specifically, we investigate the asymptotic behavior of $\mathbb{P}(\widetilde{L}(T) \leq \varepsilon)$ as $\varepsilon \downarrow 0$ under the standing assumption $\max\{\alpha_1, \alpha_2\} > 1$, which ensures that $\widetilde{L}(T)$ is well-defined in $L^2$ (see Remark \ref{rem 1.1}).

A central contribution of this work lies in the development of a novel analytical framework based on complex contour integration. Traditional approaches relying on heat kernel estimates or martingale methods face significant obstacles in the non-Gaussian setting due to the absence of explicit transition densities and path continuity. To overcome this, we introduce a carefully designed contour $\gamma(R, 3\pi/4)$ in the complex plane and exploit the integral representation of the reciprocal Gamma function (Lemma \ref{reciprocal}) to transform the moment generating function $\mathbb{E}[e^{-\lambda \widetilde{L}(T)}]$ into a single-variable complex integral. This transformation not only streamlines the asymptotic analysis as $\lambda \to \infty$ but also enables an explicit evaluation of the limiting constant governing the small ball probability.

The methodological innovation here is twofold: first, it demonstrates how probabilistic questions about singular functionals of stable processes can be recast as problems in complex analysis; second, it opens the door to a broader class of applications. The same contour-integral technique could potentially address small ball estimates for multipoint intersection local times of multiple stable particles, collision functionals of fractional Brownian motion, or even support measures of solutions to stochastic partial differential equations driven by L\'{e}vy noise. Thus, beyond resolving a specific open problem, our approach offers a new analytic toolkit for fine-scale probabilistic analysis in non-Gaussian settings.

To connect the asymptotics of the Laplace transform to the small ball probability, we employ the following Tauberian-type result:
\begin{proposition}
Let $Z$ be a non-negative random variable. If $\lim_{\lambda \to \infty} \lambda \mathbb{E}[e^{-\lambda Z}] = C \in (0,\infty)$, then
\[
\lim_{\varepsilon \downarrow 0} \varepsilon^{-1} \mathbb{P}(Z \leq \varepsilon) = C.
\]
\end{proposition}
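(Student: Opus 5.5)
This is a Karamata-type Tauberian argument for the distribution function $F(x)=\mathbb{P}(Z\le x)$ near the origin. Write the Laplace--Stieltjes transform as $\mathbb{E}[e^{-\lambda Z}]=\int_{[0,\infty)}e^{-\lambda x}\,\d F(x)$. The hypothesis says $\lambda\mathbb{E}[e^{-\lambda Z}]\to C$, which is regular variation of index $-1$ at infinity for the transform. The conclusion is the matching statement $F(\varepsilon)\sim C\varepsilon$ as $\varepsilon\downarrow 0$. Since $\mathbb{P}(Z=0)\le\mathbb{E}[e^{-\lambda Z}]=O(\lambda^{-1})\to0$, there is no atom at $0$.

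\textbf{Step 1: rescaled measures.} For $\lambda>0$ define the measures $\mu_\lambda(A)=\lambda\,\mathbb{P}(\lambda Z\in A)$ on $[0,\infty)$. Then
\[
\int e^{-sx}\,\mu_\lambda(\d x)=\lambda\,\mathbb{E}[e^{-s\lambda Z}]=s^{-1}\,(s\lambda)\,\mathbb{E}[e^{-s\lambda Z}]\to C s^{-1}=\int_0^\infty e^{-sx}\,C\,\d x
\]
for every $s>0$. By the continuity theorem for Laplace transforms of locally finite measures, $\mu_\lambda$ converges vaguely to $C\,\d x$.

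\textbf{Step 2: local boundedness.} To justify the continuity theorem, first check that the family $\mu_\lambda$ is locally bounded. This is elementary:
\[
\mu_\lambda([0,a])\le e^{a}\int e^{-x}\,\mu_\lambda(\d x)=e^{a}\lambda\,\mathbb{E}[e^{-\lambda Z}],
\]
which is bounded for large $\lambda$. Helly selection then gives vaguely convergent subsequences. To identify the limit $\mu$, use $\int e^{-sx}\,\mu_\lambda(\d x)\to\int e^{-sx}\,\mu(\d x)$. This holds because the tail $\int_{(M,\infty)} e^{-sx}\,\mu_\lambda(\d x)\le e^{-(s-s')M}\int e^{-s'x}\,\mu_\lambda(\d x)$ is uniformly small, using any $0<s'<s$. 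Uniqueness of Laplace transforms then forces $\mu=C\,\d x$, so the whole family converges.

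\textbf{Step 3: conclusion.} The limit measure $C\,\d x$ puts no mass on $\{1\}$, so vague convergence gives
\[
\mu_\lambda([0,1])=\lambda\,\mathbb{P}(Z\le 1/\lambda)\to C.
\]
Setting $\varepsilon=1/\lambda$ yields $\varepsilon^{-1}\mathbb{P}(Z\le\varepsilon)\to C$.

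The main obstacle is the identification of the vague limit in Step 2, specifically the uniform tail control that lets us pass Laplace transforms through vague convergence. The bound via a smaller exponent $s'$ handles it. Alternatively, one can simply cite Karamata's Tauberian theorem (Theorem 1.7.1 of \cite{bgt1989}) with index $\rho=1$ and $\ell\equiv C$, since $\Gamma(2)=1$.
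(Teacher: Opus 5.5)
Your proof is correct, but it takes a different route from the paper's. The paper gets this statement as the case $\alpha=1$ of Proposition \ref{Tauberian}, proved in Section 5 by Karamata's original polynomial device. It writes $\mathbb{P}(Z\le 1/\lambda)=\mathbb{E}\,g(e^{-\lambda Z})$ with $g=\mathbf{1}_{[e^{-1},1]}$. It then notes that the hypothesis gives $\lambda\mathbb{E}\,H(e^{-\lambda Z})\to C\int_0^\infty H(e^{-v})\,\d v$ for every polynomial $H$ without constant term; this holds monomial by monomial, since $\lambda\mathbb{E}e^{-k\lambda Z}\to C/k$. Finally it squeezes $g$ between $x\,p_2(x)$ and $x\,p_1(x)$, where $p_1,p_2$ come from the Weierstrass theorem applied to the continuous functions $h_i(x)/x$, and $h_i$ are piecewise linear approximations of $g$.

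You instead follow Feller's continuity-theorem route: you rescale to $\mu_\lambda$, prove local boundedness, apply Helly selection, identify every subsequential vague limit as $C\,\d x$ by uniqueness of Laplace transforms, and evaluate on the continuity set $[0,1]$. You handle the one delicate point correctly. The bound with $0<s'<s$ makes the tails uniformly small, and together with monotone convergence of the truncations for the limit measure it gives $\int e^{-sx}\mu(\d x)=C/s$.

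The paper's approach is economical in tools: it uses only Weierstrass approximation and monotonicity of expectation, and the uniqueness of the Laplace transform is in effect built into the polynomial sandwich. Yours is more conceptual: it identifies the full limit of the rescaled law $\lambda\mathbb{P}(\lambda Z\in\cdot)$, so all the statements $\lambda\mathbb{P}(Z\le x/\lambda)\to Cx$, $x>0$, come out at once rather than threshold by threshold. The price is invoking Helly's selection theorem and the uniqueness theorem for Laplace transforms of measures.
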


For clarity and logical flow, we have structured the proof of our main theorem into three distinct stages. In Step I, we derive an explicit expression for the moments $\mathbb{E}[\widetilde{L}(T)^m]$ via Fourier analysis and coordinate transformations. Step II then lifts this moment sequence to the Laplace domain and, crucially, invokes Lemma~2.3 to convert the resulting power series into a contour integral involving an auxiliary function $\Phi(z)$. Finally, Step III extracts the large-$\lambda$ asymptotics of $\lambda \mathbb{E}[e^{-\lambda \widetilde{L}(T)}]$ by analyzing the analytic properties of $\Phi(z)$ along the contour—thereby completing the bridge to the small ball estimate via the following Proposition \ref{Tauberian}, which is a special case of Karamata Tauberian theorem (Theorem 1.7.1 of \cite{khh1997}) and the proof of Proposition \eqref{Tauberian} is proved in the last section of this article.

\begin{proposition}\label{Tauberian}
	Assume that  $X$ is a non-negative random variable. If there exist constants $\alpha>0$ and $A\ge0$ such that 
	\begin{align}\label{tauberian}
		\lim\limits_{\lambda\to+\infty}\lambda^{\alpha}Ee^{-\lambda X}=A,
	\end{align}
	then 
	\[\lim\limits_{\varepsilon\downarrow0}\varepsilon^{-\alpha}P(X\le\varepsilon)=\frac{A}{\Gamma(1+\alpha)}.\]
\end{proposition}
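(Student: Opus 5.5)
This is the Karamata Tauberian theorem in the special case of a probability distribution function. The plan is to apply the standard continuity-theorem argument to rescaled measures.

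Let $F(x)=P(X\le x)$ and let $\mu$ be the law of $X$. For $\lambda>0$ define the measure $\mu_\lambda$ on $[0,\infty)$ by
\[
\mu_\lambda([0,x])=\lambda^{\alpha}F(x/\lambda).
\]
Its Laplace transform is
\[
\int e^{-sx}\,\mu_\lambda(\d x)=\lambda^\alpha E e^{-s\lambda X}.
\]
By \eqref{tauberian}, for every fixed $s>0$,
\[
\lambda^\alpha E e^{-s\lambda X}=s^{-\alpha}(s\lambda)^\alpha Ee^{-s\lambda X}\to A s^{-\alpha}\quad\text{as }\lambda\to\infty.
\]
The limit $As^{-\alpha}$ is the Laplace transform of the measure $\nu(\d x)=\frac{A}{\Gamma(\alpha)}x^{\alpha-1}\d x$, whose distribution function is
\[
\nu([0,x])=\frac{A}{\Gamma(1+\alpha)}x^\alpha .
\]

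\textbf{Step 1 (tightness).} For each $x>0$,
\[
\mu_\lambda([0,x])\le e\int e^{-y/x}\mu_\lambda(\d y)=e\,\lambda^\alpha E e^{-\lambda X/x}.
\]
The right-hand side stays bounded as $\lambda\to\infty$. So the family $\{\mu_\lambda\}$ is locally uniformly bounded, and Helly's selection theorem gives vague subsequential limits.

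\textbf{Step 2 (identifying the limit).} Take any subsequential vague limit $\mu_*$. I would show its Laplace transform equals $As^{-\alpha}$. The key estimate controls the tail uniformly: for $s>s_0>0$,
\[
\int_{(M,\infty)}e^{-sx}\mu_\lambda(\d x)\le e^{-(s-s_0)M}\lambda^\alpha Ee^{-s_0\lambda X},
\]
and the last factor is bounded. Hence convergence of the Laplace transforms on compacts passes to the limit. Since a Laplace transform determines the measure, $\mu_*=\nu$. Therefore $\mu_\lambda\to\nu$ vaguely along the full family $\lambda\to\infty$.

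\textbf{Step 3 (conclusion).} The limit $\nu$ has a continuous distribution function, so vague convergence gives $\mu_\lambda([0,1])\to\nu([0,1])$. Setting $\varepsilon=1/\lambda$, this is
\[
\varepsilon^{-\alpha}P(X\le\varepsilon)\to \frac{A}{\Gamma(1+\alpha)}.
\]

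\textbf{Degenerate case.} When $A=0$ the same argument applies with $\nu=0$: the Step 1 bound at $x=1$ already tends to $0$, which gives the result directly.

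The only delicate point is Step 2, namely passing from vague convergence to convergence of Laplace transforms, where the exponential tail bound above is the needed ingredient. Everything else is routine. Alternatively, one may simply cite Theorem 1.7.1 of \cite{bgt1989}, where $F$ is regularly varying at $0$ with index $\alpha$.
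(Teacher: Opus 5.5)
Your argument is correct, but it is not the route the paper takes. You use Feller's continuity-theorem approach:
\begin{itemize}
\item rescale to $\mu_\lambda$;
\item get local boundedness from $1_{[0,x]}(y)\le e\,e^{-y/x}$ and extract vague limits by Helly;
\item pass Laplace transforms to the limit via the exponential tail bound, and identify the limit as $\nu$ by uniqueness of Laplace transforms;
\item read off $\mu_\lambda([0,1])$ at a continuity point of $\nu$.
\end{itemize}
When you write Step 2 out, truncate with a continuous cutoff rather than $1_{[0,M]}$. Also note that the same tail bound holds for $\mu_*$ by monotone convergence. Both points are routine.

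The paper instead gives Karamata's elementary proof. It writes $P(X\le 1/\lambda)=\E g(e^{-\lambda X})$ with $g=1_{[e^{-1},1]}$. The hypothesis gives $\lambda^\alpha\E H(e^{-\lambda X})\to\frac{A}{\Gamma(1+\alpha)}\int_0^\infty H(e^{-v^{1/\alpha}})\d v$ for every polynomial $H$ without constant term. The paper then sandwiches $g$ between $x p_2(x)$ and $x p_1(x)$. Here $p_1,p_2$ are Weierstrass approximations of $h_i(x)/x$ for continuous piecewise-linear $h_2\le g\le h_1$, and the width of the ramp is sent to $0$.

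What each approach buys:
\begin{itemize}
\item The paper's route is self-contained. It needs only linearity, positivity and the Weierstrass theorem on $[0,1]$, with no compactness argument and no uniqueness theorem for Laplace transforms.
\item Your route is more conceptual. It yields at once the full vague convergence $\mu_\lambda\to\nu$, i.e.\ $\varepsilon^{-\alpha}P(X\le x\varepsilon)\to\frac{A}{\Gamma(1+\alpha)}x^\alpha$ for every $x>0$ (regular variation of $F$ at $0$). The cost is heavier measure-theoretic machinery (Helly selection, tail control, Laplace uniqueness).
\end{itemize}
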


Each technical lemma (e.g., Lemmas \ref{argument}--\ref{integralofPhi}) is crafted to support a specific analytical requirement in this pipeline, such as justifying contour deformation or controlling integrand decay.

 Our main result is the following.
 
\begin{theorem}\label{smalldeviation}
	Assume that $\alpha_1,\alpha_2\in(0,2]$ satisfy $\max\{\alpha_1,\alpha_2\}>1$. Then 
	\begin{equation*}
		\begin{split}
			&\lim\limits_{\varepsilon\downarrow0}\varepsilon^{-1}P(\widetilde{L}(T)\le\varepsilon)\\&=\left\{\begin{array}{ll}
				\displaystyle\frac{2}{T}\int_0^{+\infty}\frac{\Im\big\{ e^{\iota\frac{r}{\sqrt{2}}}\Phi(re^{-\iota\frac{3\pi}{4}})\big\}}{|\Phi(re^{\iota\frac{3\pi}{4}})|^2}\cdot\frac{e^{-\frac{1}{\sqrt{2}}r}}{r}\d r+\frac{3\pi}{2\int_{\R}\frac{1}{|v|^{\alpha_1}+|v|^{\alpha_2}}\d v}, & \min\{\alpha_1,\alpha_2\}<1, \\\\\displaystyle\frac{2}{T}\int_0^{+\infty}\frac{\Im\big\{ e^{\iota\frac{r}{\sqrt{2}}}\Phi(re^{-\iota\frac{3\pi}{4}})\big\}}{|\Phi(re^{\iota\frac{3\pi}{4}})|^2}\cdot\frac{e^{-\frac{1}{\sqrt{2}}r}}{r}\d r, & \min\{\alpha_1,\alpha_2\}\ge1,
			\end{array}\right.
		\end{split}
	\end{equation*}
	where $\Phi(z)=\int_{\R}\frac{1}{z+T|v|^{\alpha_1}+T|v|^{\alpha_2}}\d v$ for $z\in\C\backslash\{z\le0\}$.
\end{theorem}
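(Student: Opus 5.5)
By Proposition \ref{Tauberian} with $\alpha=1$ and $\Gamma(2)=1$, it suffices to show that $\lambda\E[e^{-\lambda\widetilde{L}(T)}]$ converges to the constant on the right-hand side as $\lambda\to\infty$. The plan follows the three stages announced before the theorem.

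\textbf{Moments.} Fix $m$ and expand $\E[\widetilde{L}_\varepsilon(T)^m]$ via \eqref{fourier} over the ordered simplex $0<t_1<\dots<t_m<T$. Changing variables $v_j=u_j+\dots+u_m$ and using \eqref{increment} for each increment gives, after $\varepsilon\downarrow0$ (justified as in Remark \ref{rem 1.1}),
\begin{align*}
\E[\widetilde{L}(T)^m]=\frac{m!}{(2\pi)^m}\int_{\R^m}\int_{0<t_1<\dots<t_m<T}\prod_{j=1}^m e^{-\psi(v_j)(t_j-t_{j-1})}\,\d t\,\d v,\qquad \psi(v)=|v|^{\alpha_1}+|v|^{\alpha_2}.
\end{align*}
Set $s_j=t_j-t_{j-1}$, rescale time by $T$, and represent the simplex constraint $\sum_j s_j\le 1$ by a Laplace (Bromwich) inversion in the total length. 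The $m$-fold integral then factorises into the $m$-th power of $\int_\R\frac{\d v}{z+T\psi(v)}=\Phi(z)$, up to the factor $T^m$ and a contour integral in $z$ against $e^{z}/z$.

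\textbf{Laplace transform.} Sum $\sum_m(-\lambda)^m\E[\widetilde{L}(T)^m]/m!$. The series converges for every $\lambda$ because $\E[\widetilde{L}(T)^m]\le C^m(m!)^{1-1/\max\{\alpha_1,\alpha_2\}}$. Interchanging sum and integral and using the reciprocal-Gamma representation (Lemma \ref{reciprocal}) on the contour $\gamma(R,3\pi/4)$ yields the geometric series
\[
\E[e^{-\lambda\widetilde{L}(T)}]=\frac{1}{2\pi\iota}\int_{\gamma(R,3\pi/4)}\frac{e^{z}}{z}\cdot\frac{1}{1+\frac{\lambda T}{2\pi}\Phi(z)}\,\d z .
\]
The interchange is valid once $|\lambda T\Phi(z)|/2\pi<1$ on the contour, i.e.\ for $R$ large. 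After that, both sides extend analytically in $\lambda$, and the contour may be deformed provided $1+\frac{\lambda T}{2\pi}\Phi(z)$ has no zeros in the slit plane. This non-vanishing is where Lemma \ref{argument} enters: for $\Im z>0$ we have $\Im\Phi(z)<0$, and $\Phi>0$ on $(0,\infty)$.

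\textbf{Asymptotics.} Multiply by $\lambda$ and let $\lambda\to\infty$. Pointwise on the contour,
\[
\frac{\lambda}{1+\lambda T\Phi/2\pi}\to\frac{2\pi}{T\Phi(z)}.
\]
The limit $\frac{1}{\iota T}\int_\gamma\frac{e^z}{z\Phi(z)}\,\d z$ is then evaluated by collapsing $R\to0$ onto the two rays $re^{\pm\iota3\pi/4}$ and the small arc. On the rays, conjugate symmetry $\Phi(\bar z)=\overline{\Phi(z)}$ combines the two contributions into
\[
\frac{2}{T}\int_0^\infty\frac{\Im\{e^{\iota r/\sqrt2}\Phi(re^{-\iota3\pi/4})\}}{|\Phi(re^{\iota3\pi/4})|^2}\cdot\frac{e^{-r/\sqrt2}}{r}\,\d r,
\]
where $1/\Phi=\bar\Phi/|\Phi|^2$ and $e^{re^{\iota3\pi/4}}=e^{-r/\sqrt2}e^{\iota r/\sqrt2}$.

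The small arc is where the two cases split.
\begin{itemize}
\item If $\min\{\alpha_1,\alpha_2\}<1$, then $\int_\R\frac{\d v}{\psi(v)}<\infty$, so $\Phi(0)=T^{-1}\int_\R\psi^{-1}\,\d v$ is finite. Then $1/(z\Phi)$ has a genuine simple pole at $0$, and the arc of angle $3\pi/2$ contributes
\[
\frac{3\pi}{2}\cdot\frac{1}{T\Phi(0)}=\frac{3\pi}{2\int_\R\psi^{-1}\,\d v}.
\]
\item If $\min\{\alpha_1,\alpha_2\}\ge1$, then $|\Phi(z)|\to\infty$ as $z\to0$, and the arc contribution vanishes.
\end{itemize}

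\textbf{Main obstacle.} The hard step is justifying $\lambda\to\infty$ under the integral via dominated convergence, together with integrability of the ray integrand near $r=0$ and $r=\infty$. This requires uniform lower bounds on $|1+\frac{\lambda T}{2\pi}\Phi(z)|$ along the rays, comparable to $\lambda|\Phi(z)|$; here I would use that $\arg\Phi(z)$ stays within a sector away from $\pi$, as Lemmas \ref{argument}--\ref{integralofPhi} presumably provide. It also requires the growth bound $|\Phi(z)|\asymp|z|^{1/\max\alpha-1}$ at infinity and the corresponding behaviour at $0$. I would try first to establish these with explicit scaling estimates on $\int_\R\frac{\d v}{z+T\psi(v)}$.
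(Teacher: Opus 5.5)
Your proposal is correct and follows essentially the same route as the paper: the moment formula over the simplex, then the contour representation on $\gamma(R,\frac{3\pi}{4})$, summed as a geometric series for large $R$ and deformed back using non-vanishing of $1+\frac{\lambda T}{2\pi}\Phi$, then dominated convergence as $\lambda\to\infty$ via the sector bound on $\arg\Phi$ (Lemmas \ref{argument}--\ref{Phi}), and finally the collapse $R\downarrow0$, in which the arc contributes $\frac{3\pi}{2}/(T\Phi(0))$ exactly when $\min\{\alpha_1,\alpha_2\}<1$. One slip: the moments satisfy $\E[\widetilde{L}(T)^m]\le C^m m!/\Gamma\big(m(1-\frac{1}{\max\{\alpha_1,\alpha_2\}})+1\big)$, i.e.\ they grow like $C^m(m!)^{1/\max\{\alpha_1,\alpha_2\}}$ rather than $C^m(m!)^{1-1/\max\{\alpha_1,\alpha_2\}}$, but this exponent is still below $1$, so the Laplace series still converges for every $\lambda$ as you need.
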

\begin{remark}
	Let $X^\alpha=\{X^\alpha_t,t\ge0\}$ be a symmetric $\alpha-$stable process with $\alpha\in(1,2]$. Using the same method, the small ball probability for local time $l(T)=\int_{0}^{T}\delta(X_t^\alpha)dt$ becomes 
	\begin{align*}
		\lim\limits_{\varepsilon\downarrow0}\varepsilon^{-1}P(l(T)\le\varepsilon)&=\frac{2}{T}\int_0^{+\infty}\frac{\Im\big\{ e^{\iota\frac{r}{\sqrt{2}}}\phi(re^{-\iota\frac{3\pi}{4}})\big\}}{|\phi(re^{\iota\frac{3\pi}{4}})|^2}\cdot\frac{e^{-\frac{1}{\sqrt{2}}r}}{r}\d r\\
		%&=\frac{\alpha T^{\frac1\alpha-1}}{\pi}\sin\frac{\pi}{\alpha} \int_0^{+\infty}\sin\Big(\frac{r}{\sqrt{2}}+\frac{3\pi}{4}(1-\frac1\alpha)\Big)\cdot\frac{e^{-\frac{1}{\sqrt{2}}r}}{r^{\frac1\alpha}}\d r
		&=\frac{\alpha T^{\frac1\alpha-1}}{\pi}\sin^2\frac{\pi}{\alpha}\cdot\Gamma(1-\frac{1}{\alpha}),
	\end{align*} 
	where $\phi(z)=\int_{\R}\frac{1}{z+T|v|^{\alpha}}\d v$ for $z\in\C\backslash\{z\le0\}$ and $\Gamma(\cdot)$ is the gamma function.
\end{remark}
As a direct consequence, we obtain the following integrability criterion. A direct application of small ball probability. Like \cite{cs2025}, there is the following corollary: 
\begin{corollary}
	The negative-moment-integrability 
	\[\E[\widetilde{L}(T)]^{-p}<+\infty\]
	exists if and only if $p<1$.
\end{corollary}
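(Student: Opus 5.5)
The plan is to combine Theorem \ref{smalldeviation} with the layer-cake representation of negative moments. For $p>0$ and a non-negative random variable $Z$ with $P(Z=0)=0$, Fubini gives
\[
\E[Z^{-p}]=\int_0^{+\infty}P(Z^{-p}>s)\,\d s=\int_0^{+\infty}p\,\varepsilon^{-p-1}P(Z\le\varepsilon)\,\d\varepsilon ,
\]
after the substitution $s=\varepsilon^{-p}$. Write $C_T$ for the limit $\lim_{\varepsilon\downarrow0}\varepsilon^{-1}P(\widetilde{L}(T)\le\varepsilon)$ given by Theorem \ref{smalldeviation}.

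First I will check that $\widetilde{L}(T)>0$ almost surely. Since $C_T<+\infty$, we get $P(\widetilde{L}(T)\le\varepsilon)\to0$, hence $P(\widetilde{L}(T)=0)=0$, and the formula above applies with $Z=\widetilde{L}(T)$. The finiteness of $C_T$ also gives $\varepsilon_0>0$ and $K<\infty$ such that $P(\widetilde{L}(T)\le\varepsilon)\le K\varepsilon$ for $\varepsilon\le\varepsilon_0$.

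For sufficiency, take $0<p<1$. The integral over $(0,\varepsilon_0)$ is at most $pK\int_0^{\varepsilon_0}\varepsilon^{-p}\,\d\varepsilon<\infty$. On $(\varepsilon_0,\infty)$ I bound the probability by $1$, and $\int_{\varepsilon_0}^\infty p\,\varepsilon^{-p-1}\d\varepsilon<\infty$. For $p\le 0$ (if one wishes to include it) the quantity is an ordinary positive moment. In the range $-2\le p\le0$ it is finite because $\widetilde{L}(T)\in L^2$ by Remark \ref{rem 1.1}. For all moments one can use the moment formula of Step I.

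For necessity, take $p\ge1$. If $C_T>0$, there is $\varepsilon_1>0$ with $P(\widetilde{L}(T)\le\varepsilon)\ge \tfrac{C_T}{2}\varepsilon$ on $(0,\varepsilon_1)$. Then the integral is at least $\tfrac{pC_T}{2}\int_0^{\varepsilon_1}\varepsilon^{-p}\,\d\varepsilon=+\infty$, so $\E[\widetilde{L}(T)^{-p}]=+\infty$.

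The main obstacle is therefore to verify that $C_T>0$. This is not obvious from the explicit expression in Theorem \ref{smalldeviation}, since the imaginary part in the integrand need not have a fixed sign. I would first try to show it through the Laplace side. By Proposition \ref{Tauberian} with $\alpha=1$, $C_T=\lim_{\lambda\to\infty}\lambda\E e^{-\lambda\widetilde{L}(T)}$. The explicit moment series from Step I, $\E e^{-\lambda \widetilde{L}(T)}=\sum_m(-\lambda)^m\E[\widetilde{L}(T)^m]/m!$, is written via Lemma \ref{reciprocal} as a contour integral of $1/(\ldots\Phi)$. At leading order as $\lambda\to\infty$ this should reduce to $\frac{1}{2\pi\iota}\oint e^{z}/(z\,\Phi(z))$-type terms. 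I would compare these terms with the case $\alpha_1=\alpha_2$, where the Remark above gives the explicit positive value $\frac{\alpha T^{1/\alpha-1}}{\pi}\sin^2\frac{\pi}{\alpha}\,\Gamma(1-\frac1\alpha)$.

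If that comparison does not carry through, the fallback is a direct probabilistic lower bound. By scaling, $\widetilde{L}(T)$ is controlled by the local time at $0$ of the L\'evy process $X^{\alpha_1}-\widetilde{X}^{\alpha_2}$. The aim is to show that, with probability of order $\varepsilon$, this process leaves a neighbourhood of $0$ quickly and accumulates local time at most $\varepsilon$ before time $T$. This follows the strategy of \cite{cs2025}, using the Markov property at the first exit time and the fact that the inverse local time is a subordinator with positive drift-free Laplace exponent of order $\lambda$.
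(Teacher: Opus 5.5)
\textbf{Verdict: the sufficiency half is complete, but the necessity half has a genuine gap.} Your layer-cake reduction is what the paper intends: it states the corollary as a direct application of Theorem \ref{smalldeviation} and gives no further argument. Necessity requires $C_T>0$. Your proposal does not establish this. Neither does the paper, since Proposition \ref{Tauberian} permits $A=0$ and the total in Theorem \ref{smalldeviation} has no visible sign.

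Your first route has no mechanism behind it. The contour expression is nonlinear in $\Phi$, its integrand oscillates, and nothing in it compares different exponent pairs. So the positive value for $\alpha_1=\alpha_2$ cannot be transferred to $\alpha_1\ne\alpha_2$.

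Your fallback is the right idea but is only a sketch, and two ingredients are misstated. First, $\widetilde{L}(T)$ is not merely controlled by, but equal to, the local time at $0$ of $Y=X^{\alpha_1}-\widetilde{X}^{\alpha_2}$; no scaling is involved, and none is available for $\alpha_1\ne\alpha_2$. Second, the Laplace exponent of the inverse local time is $\kappa(q)=2\pi\big/\int_{\R}\frac{\d v}{q+|v|^{\alpha_1}+|v|^{\alpha_2}}$. It grows like $q^{1-1/\max\{\alpha_1,\alpha_2\}}$, not like $q$, and it has killing rate $\kappa(0)>0$ when $\min\{\alpha_1,\alpha_2\}<1$.

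\textbf{One way to close the gap.}
\begin{enumerate}
\item $Y$ is a symmetric L\'evy process with exponent $\psi(u)=|u|^{\alpha_1}+|u|^{\alpha_2}$. It has unbounded variation and $\int_{\R}(1+\psi)^{-1}\d u<\infty$. Hence $0$ is regular for itself, and $\widetilde{L}(T)=L_T$, the continuous local time of $Y$ at $0$, which increases only on the zero set of $Y$.
\item Put $\sigma=\inf\{t:|Y_t|\ge1\}$. The strong Markov property at the inverse local time gives $P(L_\sigma>\ell+m)=P(L_\sigma>\ell)P(L_\sigma>m)$. So $L_\sigma$ is exponential with some rate $\mu\in(0,\infty)$: positive because $\sigma<\infty$, finite because $\sigma>0$ and $0$ is regular.
\item For $|x|\ge1$, $P_x(Y_t\ne0\text{ for all }t\le T)\ge c_T:=P(\sup_{t\le T}|X^{\alpha_1}_t|<\tfrac12)\,P(\sup_{t\le T}|\widetilde{X}^{\alpha_2}_t|<\tfrac12)>0$.
\item On $\{\sigma\ge T\}$ one has $L_T\le L_\sigma$. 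On $\{\sigma<T\}$, if $Y$ has no zero in $[\sigma,T]$, then $L_T=L_\sigma$.
\item The strong Markov property at $\sigma$ therefore yields $P(\widetilde{L}(T)\le\varepsilon)\ge c_T(1-e^{-\mu\varepsilon})$, hence $C_T\ge\mu c_T>0$.
\end{enumerate}
If $\min\{\alpha_1,\alpha_2\}<1$ it is quicker still: $\widetilde{L}(T)\le L_\infty$, which is exponential with rate $2\pi/\int_{\R}\frac{\d v}{|v|^{\alpha_1}+|v|^{\alpha_2}}$. With $C_T>0$ in hand, your divergence argument for $p\ge1$ goes through.
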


When $\alpha_1=\alpha_2=2$, there exists $\widetilde{L}(T)=\int_{0}^{T}\delta(B_t-\widetilde{B}_t)\d t$ and the integrability $$E\Big(\int_{0}^{T}\delta(B_t-\widetilde{B}_t)\d t\Big)^{-p}<+\infty$$
plays an important rule when studying the regularity of the probability distribution for the solution of 1-dimensional parabolic Anderson equation
\begin{align*}
 \frac{\partial}{\partial t}u(t,x)=\frac{\partial^2}{\partial x^2}u(t,x)+u(t,x)\cdot\dot{W}(t,x),\qquad t\ge0,x\in\R,   
\end{align*}
where $\dot{W}$ is the space-time Gaussian white noise with covariance 
\[\E(\dot{W}(t,x)\dot{W}(s,y))=\delta(t-s)\delta(x-y).\]
For more about this topic, \cite{hns2011} and references therein are recommended for readers.

The remainder of this paper is organized as follows. Section~2 collects preliminary lemmas concerning contour integrals and moment representations. Section~3 presents the proof of Theorem~1.1 in three logically connected steps. Section~4 establishes the technical estimates required for the contour analysis, with particular attention to the behavior of the key auxiliary function $\Phi(z)$. Finally, the proof of Proposition~1.1 is given in Section~5, where we use polynomial approximation to justify the Tauberian link between Laplace asymptotics and small ball probabilities.

\section{Preliminaries}
In this section, a contour which will play a key role in the proof of main theorem will be introduced first.
\begin{definition}\label{gamma}
	For any $R>0$, $\gamma(R,\frac{3\pi}{4})$ denotes a contour oriented by non-decreasing argument $\arg\zeta$ consisting of the following three parts: the ray $\arg\zeta=-\frac{3\pi}{4},|\zeta|\ge R$, the arc $-\frac{3\pi}{4}\le\arg\zeta\le \frac{3\pi}{4},|\zeta|= R$ and the ray $\arg\zeta=\frac{3\pi}{4},|\zeta|\ge R$.
\end{definition}
Here is a lemma about the integral representation of reciprocal gamma function using the above contour.

\begin{lemma}\label{reciprocal}
	For any $R>0$, $\gamma(R,\frac{3\pi}{4})$ defined in Definition \ref{gamma} and $s\in\C$, there exists equality 
	\begin{align}\label{Reciprocal}
		\frac{1}{\Gamma(s)}=\frac{1}{2\pi\iota}\int_{\gamma(R,\frac{3\pi}{4})}e^zz^{-s}\d z,
	\end{align}
	where $z^{-s}=e^{-s(\ln|z|+\iota\arg z)}$ is the principal branch of complex power function for $z\ne0$ with argument $\arg z\in(-\pi,\pi]$.
\end{lemma}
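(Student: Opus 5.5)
The plan is to start from Hankel's representation of the reciprocal Gamma function and then deform the contour to $\gamma(R,\frac{3\pi}{4})$ by Cauchy's theorem. Concretely, I would prove \eqref{Reciprocal} first for $\Re s>0$ (indeed first for $0<\Re s<1$), and then extend to all $s\in\C$ by analytic continuation. This works because both sides are entire functions of $s$.

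\textbf{Entirety of the right-hand side.} On the two rays we write $z=re^{\pm\iota\frac{3\pi}{4}}$. There $|e^z|=e^{-r/\sqrt2}$ and $|z^{-s}|=r^{-\Re s}e^{\pm\frac{3\pi}{4}\Im s}$. The integral therefore converges absolutely, uniformly for $s$ in compact sets, since the exponential decay dominates any power of $r$. The arc is compact and $z^{-s}$ is continuous there, because the arc avoids the branch cut $\arg z=\pi$. Differentiating under the integral sign, or using Morera's theorem, shows the right-hand side is entire in $s$. By the same Cauchy argument, the integral does not depend on $R>0$: the region between $\gamma(R_1,\cdot)$ and $\gamma(R_2,\cdot)$ is an annular sector contained in $\C\setminus(-\infty,0]$, where $e^zz^{-s}$ is holomorphic.

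\textbf{Evaluation for $0<\Re s<1$.} I would let $R\downarrow0$. The arc contributes at most $2\pi R\cdot R^{-\Re s}e^{\pi|\Im s|}\to0$ because $\Re s<1$. This leaves
\[
\frac{1}{2\pi\iota}\int_0^\infty e^{re^{\iota 3\pi/4}}r^{-s}\Big(e^{\iota\frac{3\pi}{4}(1-s)}-e^{-\iota\frac{3\pi}{4}(1-s)}\Big)\d r .
\]
Next I rotate each ray to the negative real axis, approached from above and from below. On the sector $\frac{3\pi}{4}\le\arg z<\pi$ we have $\Re z\le -|z|/\sqrt2<0$, so the arcs at infinity vanish by the exponential decay (a Jordan-type estimate). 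The arcs near $0$ vanish for $\Re s<1$. The result is the classical Hankel loop integral
\[
\frac{1}{2\pi\iota}\big(e^{\iota\pi s}-e^{-\iota\pi s}\big)\int_0^\infty e^{-t}t^{-s}\d t=\frac{\sin(\pi s)}{\pi}\Gamma(1-s)=\frac{1}{\Gamma(s)},
\]
where the last equality is Euler's reflection formula. Analytic continuation in $s$ then gives the identity for all $s\in\C$, and $R$-independence gives it for every $R>0$.

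\textbf{Main obstacle.} The main point needing care is justifying the rotation of the rays $\arg z=\pm\frac{3\pi}{4}$ to $\arg z=\pm\pi$. This requires uniform decay of $e^z$ on the whole sector $\frac{3\pi}{4}\le|\arg z|\le\pi$, together with keeping track of the branch values of $z^{-s}$ on the two sides of the cut. Since $\cos\theta\le-\frac{1}{\sqrt2}$ throughout that sector, the estimate is straightforward. The sign bookkeeping, including the orientation from $\arg=-\frac{3\pi}{4}$ to $\arg=+\frac{3\pi}{4}$, must be checked so that the factor comes out as $e^{\iota\pi s}-e^{-\iota\pi s}$ and not its negative.
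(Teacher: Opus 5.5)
Your proposal is correct and follows essentially the same route as the paper. Both arguments establish $R$-independence by Cauchy's theorem, let $R\downarrow0$ for $\Re s<1$, rotate each ray onto the corresponding side of the negative real axis to get $(e^{\iota\pi s}-e^{-\iota\pi s})\Gamma(1-s)$, apply Euler's reflection formula, and extend to all $s\in\C$ by analytic continuation. The paper handles the two sides of the cut with the rotated branches $z_1^{-s}$, $z_2^{-s}$, which is exactly your boundary-value bookkeeping, and your uniform-convergence/Morera argument for entirety fills in a step the paper only asserts.

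One slip to fix: in your intermediate display the lower ray carries $e^{re^{-\iota\frac{3\pi}{4}}}$, not $e^{re^{\iota\frac{3\pi}{4}}}$, so the exponential cannot be factored out of both terms. This is harmless because you then rotate each ray separately.
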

\begin{proof}[The proof of Lemma \ref{reciprocal}]
	For $0\le a,b,c\le+\infty$ and $\phi,\alpha,\beta\in\R$, denote paths 
	$$\gamma(a,b,\phi):\gamma(r)=re^{\iota \phi},r\text{ moves from }a\text{ to }b;$$ 
	$$\eta(c,\alpha,\beta):\eta(t)=ce^{\iota t},t\text{ moves from }\alpha\text{ to }\beta.$$
	
	For any given $R>0$, the path $\gamma(R,\frac{3\pi}{4})$ can be divided as 
	\begin{align*}
		\gamma(R,\frac{3\pi}{4})=\gamma(+\infty,R,-\frac{3\pi}{4})+\eta(R,-\frac{3\pi}{4},\frac{3\pi}{4})+\gamma(R,+\infty,\frac{3\pi}{4})
	\end{align*}
	Denote $J(R)=\int_{\gamma(R,\frac{3\pi}{4})}e^zz^{-s}\d z$. Using the division of path $\gamma(R,\frac{3\pi}{4})$, 
	\begin{align*}
		J(R)=\int_{\gamma(R,\frac{3\pi}{4})}e^zz^{-s}\d z&=\int_{\gamma(+\infty,R,-\frac{3\pi}{4})}e^zz^{-s}\d z+\int_{\eta(R,-\frac{3\pi}{4},\frac{3\pi}{4})}e^zz^{-s}\d z+\int_{\gamma(R,+\infty,\frac{3\pi}{4})}e^zz^{-s}\d z\\&:=J_1(R)+J_3(R)+J_2(R).
	\end{align*} Since $e^zz^{-s}$ is analytic on $\C\backslash\{z\le0\}$, using Cauchy's theorem, there exists 
	\[\int_{\gamma(R_1,\frac{3\pi}{4})}e^zz^{-s}\d z=\int_{\gamma(R_2,\frac{3\pi}{4})}e^zz^{-s}\d z\]
	for any $R_1>R_2>0$ because $$\gamma(R_1,\frac{3\pi}{4})-\gamma(R_2,\frac{3\pi}{4}):=\eta(R_1,-\frac{3\pi}{4},\frac{3\pi}{4})+\gamma(R_1,R_2,\frac{3\pi}{4})+\eta(R_1,\frac{3\pi}{4},-\frac{3\pi}{4})+\gamma(R_2,R_1,-\frac{3\pi}{4})$$
	is a closed path in $\C\backslash\{z\le0\}$. So that $J(R)$ is a constant function for $R>0$ and it only needs to consider the limit of $J(R)$ as $R\downarrow0$. 
	
	For $J_1(R)$, consider $z_1^{-s}=e^{-s(\ln|z|+\iota\arg_1 z)}$ for $z\ne0$ with argument $\arg_1 z\in(-\frac{3\pi}{2},\frac{\pi}{2}]$. Since $e^zz_1^{-s}$ is analytic on $\C\backslash\{\iota t:t\ge0\}$ and coincides with $e^zz^{-s}$ in $\gamma(+\infty,R,-\frac{3\pi}{4})$, by Cauchy's theorem, for $s\in\C$ with real part $\Re s<1$,
	\begin{align*}
		&\lim\limits_{R\downarrow0}J_1(R)\\&=\lim\limits_{R\downarrow0}\lim\limits_{R^*\to+\infty}\int_{\gamma(R^*,R,-\frac{3\pi}{4})}e^zz_1^{-s}\mathrm{d}z\\&=\lim\limits_{R\downarrow0}\lim\limits_{R^*\to+\infty}\Big(\int_{\gamma(R^*,R,-\pi)}e^zz_1^{-s}\d z+\int_{\eta(R^*,-\frac{3\pi}{4},-\pi)}e^zz_1^{-s}\d z+\int_{\eta(R,-\pi,-\frac{3\pi}{4})}e^zz_1^{-s}\d z\Big)\\&=\int_{\gamma(+\infty,0,-\pi)}e^zz_1^{-s}\d z-\iota\lim\limits_{R^*\to+\infty}\int_{-\pi}^{-\frac{3\pi}{4}}e^{R^*e^{\iota t}}R^{1-s}e^{\iota(1-s)t}\d t+\iota\lim\limits_{R\downarrow0}\int_{-\pi}^{-\frac{3\pi}{4}}e^{R e^{\iota t}}R^{1-s}e^{\iota(1-s)t}\d t
		\\&=e^{\iota s\pi}\int_{0}^{+\infty}e^{-r}r^{-s}\d r=e^{\iota s\pi}\Gamma(1-s)
	\end{align*}
	where the fourth equality comes from the fact that 
	\[\Big|e^{re^{\iota t}}r^{1-s}e^{\iota(1-s)r}\Big|\le r^{1-\Re s}e^{-\frac{\sqrt{2}}{2}r}\]
	for all $t\in[-\pi,-\frac{3\pi}{4}]$ and $r\in(0,+\infty)$. Similarly, consider $z_2^{-s}=e^{-s(\ln|z|+\iota\arg_2 z)}$ for $z\ne0$ with argument $\arg_2 z\in(-\frac{\pi}{2},\frac{3\pi}{2}]$, then for $s\in\C$ with real part $\Re s<1$,
	\begin{align*}
		&\lim\limits_{R\downarrow0}J_2(R)\\&=\lim\limits_{R\downarrow0}\lim\limits_{R^*\to+\infty}\int_{\gamma(R,R^*,\frac{3\pi}{4})}e^zz_2^{-s}\d z\\&=\lim\limits_{R\downarrow0}\lim\limits_{R^*\to+\infty}\Big(\int_{\gamma(R,R^*,\pi)}e^zz_2^{-s}\d z+\int_{\eta(R^*,\frac{3\pi}{4},\pi)}e^zz_2^{-s}\d z+\int_{\eta(R,\pi,\frac{3\pi}{4})}e^zz_2^{-s}\d z\Big)\\&=\int_{\gamma(0,+\infty,\pi)}e^zz_2^{-s}\d z+\iota\lim\limits_{R^*\to+\infty}\int_{\frac{3\pi}{4}}^{\pi}e^{R^*e^{\iota t}}(R^*)^{1-s}e^{\iota(1-s)t}\d t-\iota\lim\limits_{R\downarrow0}\int_{\frac{3\pi}{4}}^{\pi}e^{R e^{\iota t}}R^{1-s}e^{\iota(1-s)t}\d t
		\\&=-e^{-\iota s\pi}\int_{0}^{+\infty}e^{-r}r^{-s}\d r=-e^{-\iota s\pi}\Gamma(1-s).
	\end{align*}
	Moreover, $\lim\limits_{R\downarrow0}J_3(R)=0$ because of
	\begin{align*}
		|J_3(R)|=\Big|\int_{-\frac{3\pi}{4}}^{\frac{3\pi}{4}}e^{R e^{\iota t}}R^{1-s}e^{\iota(1-s)t}\d t\Big|\le\frac{3\pi}{2}R^{1-\Re s}e^{R}.
	\end{align*}
	Then using Euler's reflection formula $\Gamma(1-s)\Gamma(s)=\frac{\pi}{\sin\pi s}$ and the fact 
	\[\lim\limits_{R\downarrow0}J(R)=\lim\limits_{R\downarrow0}J_1(R)+\lim\limits_{R\downarrow0}J_2(R)+\lim\limits_{R\downarrow0}J_3(R)=e^{\iota s\pi}\Gamma(1-s)-e^{-\iota s\pi}\Gamma(1-s),\] for all $s\in\C$ with real part $\Re s<1$,
	\[\frac{1}{\Gamma(s)}=\frac{1}{2\pi\iota}\int_{\gamma(R,\frac{3\pi}{4})}e^zz^{-s}\d z.\]
	Because both sides of the above equality are entire functions and coincide with each other on $\Re s<1$, by uniqueness theorem of analytic function, they coincide on the complex plane, which completes the proof.
\end{proof}
Then a lemma which guarantees the change of contour integral and limitation will be proven.
\begin{lemma}\label{changeoforder}
	Given $R>0$ and $F_k(z):\C\to\C,k=0,1,2,\cdots$ complex functions satisfying that there exists a constant $C>0$ and $\beta>0$ s.t. 
	\[|F_k(z)|\le C\max\{|z|^{\beta},1\}\]
	for any $k\ge0$ and $z\in\gamma(R,\frac{3\pi}{4})$. If $F_k(z)\to F_0(z)$ as $k\to+\infty$, then  
	\[\lim_{k\to+\infty}\int_{\gamma(R,\frac{3\pi}{4})}F_k(z)e^z\d z=\int_{\gamma(R,\frac{3\pi}{4})}F_0(z)e^z\d z.\]
\end{lemma}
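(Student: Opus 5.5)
This is a dominated convergence argument along the contour. I parametrize $\gamma(R,\frac{3\pi}{4})$ by arc length and write the integral as the sum of three pieces: the two rays $\zeta=re^{\pm\iota\frac{3\pi}{4}}$ with $r\ge R$, and the arc $\zeta=Re^{\iota t}$ with $t\in[-\frac{3\pi}{4},\frac{3\pi}{4}]$. On each piece $|\d z|$ becomes $\d r$ or $R\,\d t$, so I treat each as a Lebesgue integral on a subset of $\R$. Throughout, I read the convergence $F_k(z)\to F_0(z)$ as pointwise on $\gamma(R,\frac{3\pi}{4})$, which is all the argument needs. I also assume the $F_k$ are measurable along the contour, e.g.\ continuous there, which holds in the intended applications.

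\textbf{Rays.} For $z=re^{\pm\iota\frac{3\pi}{4}}$ we have $\Re z=-\frac{r}{\sqrt2}$, so $|e^z|=e^{-r/\sqrt2}$. The hypothesis then gives
\[
|F_k(z)e^z|\le C\max\{r^{\beta},1\}\,e^{-\frac{r}{\sqrt2}},\qquad r\ge R,
\]
uniformly in $k$. The right-hand side is integrable on $[R,+\infty)$ since $\int_R^\infty (r^\beta+1)e^{-r/\sqrt2}\,\d r<\infty$. The integrands converge pointwise to $F_0(z)e^z$, so dominated convergence gives convergence of both ray integrals.

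\textbf{Arc.} For $z=Re^{\iota t}$ we have $|e^z|\le e^R$, so $|F_k(z)e^z\,\iota Re^{\iota t}|\le C\max\{R^\beta,1\}e^R R$. This is a constant bound on a compact interval of length $\frac{3\pi}{2}$, and bounded convergence gives convergence of the arc integral.

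Adding the three limits proves the claim. The same bound shows that all the integrals $\int_{\gamma}F_k(z)e^z\,\d z$, including the one for $k=0$, are absolutely convergent, so the statement is meaningful. The only step needing care is the uniform exponential decay $e^{-r/\sqrt2}$ on the rays. This is exactly why the contour opens at angle $\frac{3\pi}{4}>\frac{\pi}{2}$: it guarantees $\Re z\to-\infty$ linearly in $|z|$, which beats any polynomial growth allowed for $F_k$. Beyond this, I expect no real obstacle.
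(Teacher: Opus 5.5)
Your proposal is correct and follows the paper's proof. Both split $\gamma(R,\frac{3\pi}{4})$ into the two rays and the arc, dominate the integrand by $C\max\{r^{\beta},1\}e^{-r/\sqrt{2}}$ on the rays and by a constant on the arc, and then apply dominated convergence. Your parametrization ($|\d z|=\d r$ on the rays, $R\,\d t$ on the arc) is in fact the correct one; the paper's display instead has a stray factor $1/r$ on the rays and omits $Re^{\iota t}$ on the arc, so it really parametrizes $\int F_k(z)e^{z}z^{-1}\,\d z$, though this slip does not affect the argument.
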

\begin{proof}
	Using notation $\gamma(a,b,\phi)$ and $\eta(c,\alpha,\beta)$ for $0\le a,b,c\le+\infty$ and $\phi,\alpha,\beta\in\R$ in Lemma \ref{reciprocal}, for any $k\ge0$, 
	\begin{align*}
			&\int_{\gamma(R,\frac{3\pi}{4})}F_k(z)e^z\d z\\&=\int_{\gamma(+\infty,R,-\frac{3\pi}{4})}F_k(z)\cdot e^z\d z+\int_{\eta(R,-\frac{3\pi}{4},\frac{3\pi}{4})}F_k(z)\cdot e^z\d z+\int_{\gamma(R,+\infty,\frac{3\pi}{4})}F_k(z)\cdot e^z\d z
			\\&=\int_{+\infty}^{R}\frac{F_k(re^{-\iota\frac{3\pi}{4}})}{r}\cdot e^{re^{-\iota\frac{3\pi}{4}}}\d r+\iota\int_{-\frac{3\pi}{4}}^{\frac{3\pi}{4}}F_k(Re^{\iota t})\cdot e^{Re^{\iota t}}\d t+\int_{R}^{+\infty}\frac{F_k(re^{\iota\frac{3\pi}{4}})}{r}\cdot e^{re^{\iota\frac{3\pi}{4}}}\d r.
	\end{align*}
	Because 
	\[\Big|\frac{F_k(re^{-\iota\frac{3\pi}{4}})}{r}\cdot e^{re^{-\iota\frac{3\pi}{4}}}\Big|\le\frac{C\max\{r^{\beta},1\}}{R}e^{-\frac{r}{\sqrt{2}}},\Big|\frac{F_k(re^{\iota\frac{3\pi}{4}})}{r}\cdot e^{re^{\iota\frac{3\pi}{4}}}\Big|\le\frac{C\max\{r^{\beta},1\}}{R}e^{-\frac{r}{\sqrt{2}}}\]
	for any $r\ge R$ and $\Big|F_k(Re^{\iota t})\cdot e^{Re^{\iota t}}\Big|\le C\max\{R^{\beta},1\}\cdot e^R$ for any $t\in[-\frac{3\pi}{4},\frac{3\pi}{4}]$,
	the desired result comes from the dominated convergence theorem.
\end{proof}
    Finally a lemma about $\gamma(R,\frac{3\pi}{4})$ will be given.
\begin{lemma}\label{simplex}
	Assume $n\ge1$, $R>0$, $T>0$ and  
	\[D^n_T=\{(u_1,u_2,\cdots,u_n):u_1+u_2+\cdots+u_n<1,u_j>0,j=1,2,\cdots,n\}.\] Then
	\begin{align*}
		\int_{D^n_T}\exp\Big(-\sum_{j=1}^{n}a_ju_j\Big)\d u_1\cdots \d u_n=\frac{T^n}{2\pi\iota}\int_{\gamma( R,\frac{3\pi}{4})}\prod_{j=1}^{n}\frac{1}{z+Ta_j}\cdot e^zz^{-1}\d z
	\end{align*}
	for any $a_j\ge0$, $j=1,2,\cdots,n$. 
\end{lemma}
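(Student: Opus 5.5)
The plan is to verify the identity by expanding both sides into power series in the $a_j$ and matching coefficients, using Lemma \ref{reciprocal} for the right-hand side and the Dirichlet integral for the left-hand side.

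\emph{Normalization of the domain.} The factor $T^n$ and the scaling $Ta_j$ on the right-hand side correspond to the simplex $\{u_1+\cdots+u_n<T,\ u_j>0\}$, which is what the subscript in $D^n_T$ suggests. I will therefore prove
\[
\int_{\{u_j>0,\ \sum_j u_j<T\}}e^{-\sum_{j}a_ju_j}\,\d u=\frac{T^n}{2\pi\iota}\int_{\gamma(R,\frac{3\pi}{4})}\prod_{j=1}^{n}\frac{1}{z+Ta_j}\,e^zz^{-1}\,\d z .
\]
Substituting $u=Tw$ turns the left-hand side into $T^n\int_{D^n}e^{-T\sum_j a_jw_j}\,\d w$, where $D^n$ is the unit simplex $\{\sum_j w_j<1,\ w_j>0\}$. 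If instead one insists on $D^n$ with the unscaled exponent $e^{-\sum_j a_ju_j}$ as literally written, the same argument applies after replacing $Ta_j$ by $a_j$ and $T^n$ by $1$.

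\emph{Step 1: the right-hand side does not depend on $R$.} The integrand $e^zz^{-1}\prod_j(z+Ta_j)^{-1}$ is analytic on $\C\backslash\{z\le0\}$. Its poles, at $0$ and at $-Ta_j\le0$, lie on the cut. As in the proof of Lemma \ref{reciprocal}, the difference of two contours $\gamma(R_1,\frac{3\pi}{4})$ and $\gamma(R_2,\frac{3\pi}{4})$ is a closed path in the slit plane, so Cauchy's theorem shows the integral is independent of $R$. I may therefore take $R>2T\max_j a_j$. Then on the whole contour $|z|\ge R$, and hence $|Ta_j/z|\le\frac12$ for every $j$.

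\emph{Step 2: expand and integrate term by term.} For $|z|\ge R$ I expand
\[
\prod_j\frac{1}{z+Ta_j}=z^{-n}\prod_j\sum_{k_j\ge0}\Big(-\frac{Ta_j}{z}\Big)^{k_j}.
\]
The partial sums $F_K$ of this multiple series, each multiplied by $z^{-1}$, are bounded uniformly in $K$ by $2^nR^{-n-1}$ on $\gamma(R,\frac{3\pi}{4})$, and they converge pointwise there. Lemma \ref{changeoforder} (with $\beta$ arbitrary) therefore allows me to interchange the sum and the contour integral. Each resulting term is a multiple of $\frac{1}{2\pi\iota}\int e^zz^{-(n+1+|k|)}\,\d z$, which equals $1/\Gamma(n+1+|k|)$ by Lemma \ref{reciprocal}. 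The right-hand side thus becomes
\[
T^n\sum_{k\in\mathbb{N}^n}\frac{(-T)^{|k|}\prod_j a_j^{k_j}}{(n+|k|)!}.
\]

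\emph{Step 3: expand the left-hand side.} I expand $e^{-T\sum_j a_jw_j}=\prod_j\sum_{k_j}\frac{(-Ta_jw_j)^{k_j}}{k_j!}$. The series converges absolutely and uniformly on the bounded simplex, so it can be integrated term by term. The Dirichlet integral $\int_{D^n}\prod_jw_j^{k_j}\,\d w=\prod_jk_j!/(n+|k|)!$ then gives exactly the series obtained in Step 2, which proves the identity.

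The only delicate point is justifying the interchange in Step 2. This is why I enlarge $R$ first (Step 1), so that the geometric series converges uniformly on the entire contour and the bound required by Lemma \ref{changeoforder} holds with a constant. Everything else reduces to known integrals.
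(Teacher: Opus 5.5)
Your proof is correct and takes essentially the same route as the paper. Both arguments expand the left side as a power series and evaluate it with the Dirichlet integral. Both then identify the resulting reciprocal-Gamma series with the contour integral using Lemma \ref{reciprocal}, a Cauchy shift to a radius large enough for the geometric series to converge uniformly on the contour, and Lemma \ref{changeoforder}. Your reading of the simplex as $\{u_j>0,\ \sum_j u_j<T\}$ is the one the paper actually uses in Step I (the ``$<1$'' in the statement is a typo). Your choice $R>2T\max_j a_j$ also gives a cleaner uniform bound than the paper's $R^*=1+T\sum_j|a_j|$.
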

\begin{proof}[The proof of Lemma \ref{simplex}]
	Recall the series expansion $e^x=\sum_{k=0}^{+\infty}\frac{x^k}{k!}$ for all $x\in\R$. There exists 
	\begin{align}\label{lma1_step1}
			&\int_{D^n_T}\exp\Big(-\sum_{j=1}^{n}a_ju_j\Big)\d u_1\cdots \d u_n\nonumber\\&=\sum_{m_1=0}^{\infty}\cdots\sum_{m_n=0}^{\infty}\prod_{j=1}^n\frac{(-a_j)^{m_j}}{m_j!}\int_{D^n_T}\prod_{j=1}^nu_j^{m_j}\d u_1\cdots \d u_n\nonumber
			%\\&=\sum_{m_1=0}^{\infty}\cdots\sum_{m_n=0}^{\infty}\prod_{j=1}^n\frac{(-a_j)^{m_j}}{m_j!}T^{\sum_{j=1}^{n}(m_j+1)}\frac{\prod_{j=1}^{n}\Gamma(m_j+1)}{\Gamma(\sum_{j=1}^n(m_j+1)+1)}
			\\&=T^n\sum_{m_1=0}^{\infty}\cdots\sum_{m_n=0}^{\infty}\frac{\prod_{j=1}^{n}(-Ta_j)^{m_j}}{\Gamma(n+\sum_{j=1}^nm_j+1)},
	\end{align}
	where Fubini's theorem and the fact 
	\[\int_{D^n_T}\sum_{m_1=0}^{\infty}\cdots\sum_{m_n=0}^{\infty}\prod_{j=1}^n\frac{(a_ju_j)^{m_j}}{m_j!}\d u_1\cdots \d u_n=\int_{D^n_T}\exp\Big(\sum_{j=1}^{n}a_ju_j\Big)\d u_1\cdots \d u_n<+\infty\]
	are used to get the the first equality and the second equality comes from Lemma \ref{Dirichlet}. To calculate the sum of reciprocal Gamma functions in \eqref{lma1_step1}, denote $R^{*}=1+T\sum_{j=1}^{n}|a_j|$. Then by Lemma \ref{reciprocal},
	\begin{align}\label{lma1_step2}
			&T^n\sum_{m_1=0}^{\infty}\cdots\sum_{m_n=0}^{\infty}\frac{\prod_{j=1}^{n}(-Ta_j)^{m_j}}{\Gamma(n+\sum_{j=1}^nm_j+1)}\nonumber\\&=T^n\sum_{m_1=0}^{\infty}\cdots\sum_{m_n=0}^{\infty}\frac{1}{2\pi\iota}\int_{\gamma(R,\frac{3\pi}{4})}\prod_{j=1}^{n}\Big(\frac{-Ta_j}{z}\Big)^{m_j}e^zz^{-n-1}\d z\nonumber\\&=T^n\sum_{m_1=0}^{\infty}\cdots\sum_{m_n=0}^{\infty}\frac{1}{2\pi\iota}\int_{\gamma(R^{*},\frac{3\pi}{4})}\prod_{j=1}^{n}\Big(\frac{-Ta_j}{z}\Big)^{m_j}e^zz^{-n-1}\d z\nonumber\\&=\frac{T^n}{2\pi\iota}\int_{\gamma(R^{*},\frac{3\pi}{4})}\prod_{j=1}^{n}\frac{1}{z+Ta_j}\cdot e^zz^{-1}\d z\nonumber\\&=\frac{T^n}{2\pi\iota}\int_{\gamma(R,\frac{3\pi}{4})}\prod_{j=1}^{n}\frac{1}{z+Ta_j}\cdot e^zz^{-1}\d z,
	\end{align}
	where the second and last equalities come from the Cauchy's theorem because functions  $\prod_{j=1}^{n}\big(\frac{-Ta_j}{z}\big)^{m_j}e^zz^{-n-1}$ and $\prod_{j=1}^{n}\frac{1}{z+Ta_j}\cdot e^zz^{-1}$ are analytic in the region between $\gamma(R,\frac{3\pi}{4})$ and $\gamma(R^{*},\frac{3\pi}{4})$, and the third equality comes from Lemma \ref{changeoforder} and the fact that 
	\[\sum_{m_1=0}^{k}\cdots\sum_{m_n=0}^{k}\prod_{j=1}^{n}\Big|\frac{-Ta_j}{z}\Big|^{m_j}\le\frac{\sum_{j=1}^{n}T|a_j|}{R^{*}}\le (R^{*})^n<+\infty\]
	 on $\gamma(R^{*},\frac{3\pi}{4})$ for any $k\ge0$.
\end{proof}
\section{Proof of main theorem}

The proof of this theorem will be divided into three steps.

{\bf Step I: Moment computation $\lim\limits_{\varepsilon\downarrow0}\E[\widetilde{L}_\varepsilon(T)]^m$.}

Our first goal is to obtain an explicit formula for the $m$-th moment of the regularized collision local time $\widetilde{L}_\varepsilon(T)$. This step relies on Fourier inversion and a change of variables that decouples the two stable processes.

  The limit of $\E[\widetilde{L}_\varepsilon(T)]^m$ as $\varepsilon\downarrow0$ will be calculated first.  Denote $$[0,T]^m_<=\{(t_1,t_2,\cdots,t_m):0=t_0<t_1<t_2<\cdots<t_m<T\}$$
  and $D^m_T$ as that in Lemma \ref{simplex}. 
  From \eqref{increment} and \eqref{fourier}, there exists
  
 \begin{align}
 		&\E[\widetilde{L}_\varepsilon(T)]^m\nonumber\\&=\frac{1}{(2\pi)^{m}}\E\Big(\int_{0}^{T}\int_{\R}e^{-\frac{\varepsilon}{2}|u|^2}e^{\iota u\cdot (X^{\alpha_1}_t-\widetilde{X}^{\alpha_2}_t)}\d u\d t\Big)^m\nonumber
 %	\\&=\frac{1}{(2\pi)^{m}}\int_{[0,T]^m}\int_{\R^{m}}e^{-\iota\sum_{j=1}^{m}u_j\cdot x-\frac{\varepsilon}{2}\sum_{j=1}^{m}|u_j|^2}\E e^{\iota \sum_{j=1}^{m}u_j\cdot X^{\alpha_1}_{t_j}}\E e^{-\iota \sum_{j=1}^{m}u_j\cdot\widetilde{X}^{\alpha_2}_{t_j}}dudt
 	\\&=\frac{m!}{(2\pi)^{m}}\int_{[0,T]^m_<}\int_{\R^{m}}e^{-\frac{\varepsilon}{2}\sum_{j=1}^{m}|u_j|^2}\E e^{\iota \sum_{j=1}^{m}u_j\cdot X^{\alpha_1}_{t_j}}\E e^{-\iota \sum_{j=1}^{m}u_j\cdot\widetilde{X}^{\alpha_2}_{t_j}}\d u\d t\nonumber
 	\\&=\frac{m!}{(2\pi)^{m}}\int_{[0,T]^m_<}\int_{\R^{m}}e^{-\frac{\varepsilon}{2}\sum_{j=1}^{m}|u_j|^2} e^{-\sum_{j=1}^{m}(|\sum_{i=j}^{m}u_j|^{\alpha_1}+|\sum_{i=j}^{m}u_j|^{\alpha_2})(t_j-t_{j-1})}\d u\d t\nonumber
 	\\&=\frac{m!}{(2\pi)^{m}}\int_{D^m_T}\int_{\R^{m}}e^{-\frac{\varepsilon}{2}\sum_{j=1}^{m}|v_j-v_{j+1}|^2}e^{-\sum_{j=1}^{m}(|v_j|^{\alpha_1}+|v_j|^{\alpha_2})s_j}\d v\d s,
 \end{align}
 where coordinate transforms $s_j=t_j-t_{j-1}$, $v_j=\sum_{i=j}^{m}u_i$, $j=1,2,\cdots,m$ and $v_{m+1}=0$ are used in the last equality. Using \eqref{dirichlet}, 
\begin{align}\label{bound}
		&\int_{D^m_T}\int_{\R^{m}}e^{-\sum_{j=1}^{m}(|v_j|^{\alpha_1}+|v_j|^{\alpha_2})s_j}\d v\d s\nonumber\\&=\int_{D^m_T}\prod_{j=1}^{m}s_j^{-\frac{1}{\alpha}}\d s\cdot\Big(\int_{\R}e^{-|v|^{\max\{\alpha_1,\alpha_2\}}}\d v\Big)^m\nonumber\\&=\frac{T^{m-\frac{m}{\max\{\alpha_1,\alpha_2\}}}}{\Gamma(m(1-\frac{1}{\max\{\alpha_1,\alpha_2\}})+1)}\Big(\Gamma\Big(1-\frac{1}{\max\{\alpha_1,\alpha_2\}}\Big)\int_{\R}e^{-|v|^{\max\{\alpha_1,\alpha_2\}}}\d v\Big)^m<+\infty.
\end{align}
 So that by the dominated convergence theorem and $$\big|e^{-\frac{\varepsilon}{2}\sum_{j=1}^{m}|v_j-v_{j+1}|^2}\big|\le1,$$
there exists 
\begin{align}\label{limit}
		\lim\limits_{\varepsilon\downarrow0}\E[\widetilde{L}_\varepsilon(T)]^m&=\frac{m!}{(2\pi)^{m}}\int_{D^m_T}\int_{\R^{m}}\lim\limits_{\varepsilon\downarrow0}e^{-\frac{\varepsilon}{2}\sum_{j=1}^{m}|v_j-v_{j+1}|^2}e^{-\sum_{j=1}^{m}(|v_j|^{\alpha_1}+|v_j|^{\alpha_2})s_j}\d v\d s\nonumber\\&=\frac{m!}{(2\pi)^{m}}\int_{D^m_T}\int_{\R^{m}}e^{-\sum_{j=1}^{m}(|v_j|^{\alpha_1}+|v_j|^{\alpha_2})s_j}\d v\d s\nonumber\\&=\frac{m!}{(2\pi)^{m}}\int_{\R^{m}}\int_{D^m_T}e^{-\sum_{j=1}^{m}(|v_j|^{\alpha_1}+|v_j|^{\alpha_2})s_j}\d s\d v.
\end{align}

{\bf Step II: Laplace transform $\E e^{-\lambda\widetilde{L}(T)}$ via contour integration.}

With the moment formula in hand, we consider the Laplace transform $\mathbb{E}[e^{-\lambda \widetilde{L}(T)}] = \sum_{m=0}^\infty \frac{(-\lambda)^m}{m!} \mathbb{E}[\widetilde{L}(T)^m]$. The key insight is to apply Lemma \ref{simplex} to convert this infinite series into a single contour integral involving the auxiliary function $\Phi(z) = \int_{\mathbb{R}} \frac{1}{z + T|v|^{\alpha_1} + T|v|^{\alpha_2}} \, \d v$. 

From the fact that $|e^{-x}-e^{-y}|\le|x-y|$ for all $x,y\ge0$, 
\[|\E e^{-\lambda\widetilde{L}(T)}-\E e^{-\lambda\widetilde{L}_\varepsilon(T)}|\le|\lambda|\E|\widetilde{L}_\varepsilon(T)-\widetilde{L}(T)|\le|\lambda|\Big(\E|\widetilde{L}_\varepsilon(T)-\widetilde{L}(T)|^2\Big)^{\frac12},\]
which means that $$\E e^{-\lambda\widetilde{L}(T)}=\lim\limits_{\varepsilon\downarrow0}\E e^{-\lambda\widetilde{L}_\varepsilon(T)}=\lim\limits_{\varepsilon\downarrow0}\sum_{m=0}^{+\infty}\frac{(-\lambda)^m}{m!}\E[\widetilde{L}_\varepsilon(T)]^m.$$
From \eqref{bound}, 
\begin{align*}
	\Big|\frac{(-\lambda)^m}{m!}\E[\widetilde{L}_\varepsilon(T)]^m\Big|\le\frac{c^m}{\Gamma(m(1-\frac{1}{\alpha_1})+1)},
\end{align*}
where $c=\frac{T^{1-\frac{1}{\alpha_1}}|\lambda|\Gamma(1-\frac{1}{\alpha_1})}{2\pi}\int_{\R}e^{-|v|^{\alpha_1}}\d v$. Using Stirling's formula $\lim\limits_{x\to+\infty}\frac{\Gamma(x+1)}{\sqrt{2\pi x}(\frac{x}{e})^x}=1$, it is easy to get that $\lim\limits_{m\to+\infty}\sqrt[m]{\frac{c^m}{\Gamma(m(1-\frac{1}{\alpha_1})+1)}}=0$ and then $\sum_{m=0}^{+\infty}\frac{c^m}{\Gamma(m(1-\frac{1}{\alpha_1})+1)}<+\infty$. So that using the dominated convergence theorem, \eqref{limit} and \eqref{dirichlet}, 
\begin{align}\label{mainproofstep2_1}
		\E e^{-\lambda\widetilde{L}(T)}&=\sum_{m=0}^{+\infty}\frac{(-\lambda)^m}{m!}\lim\limits_{\varepsilon\downarrow0}\E[\widetilde{L}_\varepsilon(T)]^m\nonumber
		\\&=\sum_{m=0}^{+\infty}\frac{(-\lambda)^m}{m!}\cdot\frac{m!}{(2\pi)^{m}}\int_{\R^{m}}\int_{D^m_T}e^{-\sum_{j=1}^{m}(|v_j|^{\alpha_1}+|v_j|^{\alpha_2})s_j}\d s\d v\nonumber
		\\&=\sum_{m=0}^{+\infty}\frac{(-\iota)(-\lambda T)^m}{(2\pi)^{m+1}}\int_{\R^{m}}\int_{\gamma(R,\frac{3\pi}{4})}\prod_{j=1}^{m}\frac{1}{z+T|v_j|^{\alpha_1}+T|v_j|^{\alpha_2}}\cdot e^zz^{-1}\d z\d v\nonumber
		\\&=\sum_{m=0}^{+\infty}\frac{(-\iota)(-\lambda T)^m}{(2\pi)^{m+1}}\int_{\gamma(R,\frac{3\pi}{4})}\int_{\R^{m}}\prod_{j=1}^{m}\frac{1}{z+T|v_j|^{\alpha_1}+T|v_j|^{\alpha_2}}dv\cdot e^zz^{-1}\d z\nonumber
		\\&=\sum_{m=0}^{+\infty}\frac{(-\iota)(-\lambda T)^m}{(2\pi)^{m+1}}\int_{\gamma(R,\frac{3\pi}{4})}\big(\Phi(z)\big)^{m}\cdot e^zz^{-1}\d z,
\end{align}
where $R>0$ is an arbitrary positive number, $\Phi(z)=\int_{\R}\frac{1}{z+T|v|^{\alpha_1}+T|v|^{\alpha_2}}\d v$ is defined in Lemma \ref{Phi} and the fourth equality is obtained from Lemma \ref{argument} and Fubini's theorem.

Notice that from Lemma \ref{Phi}, there exists  
\[|\Phi(z)|\le4\int_{\R}\frac{1}{1+T|v|^{\max\{\alpha_1,\alpha_2\}}}\d v\cdot |z|^{\frac{1}{\max\{\alpha_1,\alpha_2\}}-1}\]
%\[|\Phi_0(z)|\ge\int_{\R}\frac{1}{|z|+T|v|^{\alpha_1}+T|v|^{\alpha_2}}dv\ge\frac{(|z|+1)^{\frac{1}{\max\{\alpha_1,\alpha_2\}}-1}}{2T+1}\int_{\R}\frac{1}{1+|v|^{\max\{\alpha_1,\alpha_2\}}}dv\]
on $\gamma(R,\frac{3\pi}{4})$. Denote $$R^{**}=\Big(\frac{4\lambda T}{\pi}\int_{\R}\frac{1}{1+T|v|^{\max\{\alpha_1,\alpha_2\}}}\d v\Big)^{\frac{\max\{\alpha_1,\alpha_2\}}{\max\{\alpha_1,\alpha_2\}-1}},$$
where $\Big|\frac{-\lambda T\Phi(z)}{2\pi}\Big|\le\frac12<1$ when $|z|\ge R^{**}$ and $\arg z\in[-\frac{3\pi}{4},\frac{3\pi}{4}]$.
Then there exists
\begin{align}\label{mainproofstep2_2}
		&\sum_{m=0}^{+\infty}\frac{(-\iota)(-\lambda T)^m}{(2\pi)^{m+1}}\int_{\gamma(R,\frac{3\pi}{4})}\big(\Phi(z)\big)^{m}\cdot e^zz^{-1}\d z\nonumber
		\\&=\sum_{m=0}^{+\infty}\frac{1}{2\pi\iota}\int_{\gamma(R^{**},\frac{3\pi}{4})}\bigg(\frac{-\lambda T\Phi(z)}{2\pi}\bigg)^{m}\cdot e^zz^{-1}\d z\nonumber
		\\&=\frac{1}{2\pi\iota}\int_{\gamma(R^{**},\frac{3\pi}{4})}\frac{1}{1+\lambda T(2\pi)^{-1}\Phi(z)}\cdot e^zz^{-1}\d z\nonumber
		\\&=\frac{1}{2\pi\iota}\int_{\gamma(R,\frac{3\pi}{4})}\frac{1}{1+\lambda T(2\pi)^{-1}\Phi(z)}\cdot e^zz^{-1}\d z,
\end{align}
where the first equality comes from the fact that $\big(\Phi_0(z)\big)^{m}$ is analytic between $\gamma(R^{**},\frac{3\pi}{4})$, the second equality comes from Lemma \ref{changeoforder} and $\gamma(R,\frac{3\pi}{4})$ and the last equality comes from the fact that $\frac{1}{1+\lambda T(2\pi)^{-1}\Phi_0(z)}$ is also analytic in this region, which is obtained from \eqref{Phi2} and \eqref{Phi3} in Lemma \ref{Phi}. Combining \eqref{mainproofstep2_1} and \eqref{mainproofstep2_2}, there exists
\[\E e^{-\lambda\widetilde{L}(T)}=\frac{1}{2\pi\iota}\int_{\gamma(R,\frac{3\pi}{4})}\frac{1}{1+\lambda T(2\pi)^{-1}\Phi(z)}\cdot e^zz^{-1}\d z.\]

{\bf Step III: Asymptotic extraction and small ball limit property of $P(\widetilde{L}(T)\le\varepsilon)$. }

Finally, we analyze the behavior of the contour integral as $\lambda \to \infty$. By deforming the contour and applying dominated convergence (justified by Lemmas \ref{argument}--\ref{integralofPhi}), we compute $\lim_{\lambda \to \infty} \lambda \mathbb{E}[e^{-\lambda \widetilde{L}(T)}]$. Proposition \ref{Tauberian} immediately yields the desired small ball probability.

 Since $\arg \Phi(z)\in[-\frac{3\pi}{4},\frac{3\pi}{4}]$ when $\arg z\in[-\frac{3\pi}{4},\frac{3\pi}{4}]$ from \eqref{Phi3} of Lemma \ref{Phi}, using \eqref{argu1} of Lemma \ref{argument} and \eqref{Phi2} of Lemma \ref{Phi}, 
\begin{equation*}
	\begin{split}
		\bigg|\frac{\lambda}{1+\lambda T(2\pi)^{-1}\Phi(z)}\bigg|&\le\frac{4\lambda}{1+|\lambda T(2\pi)^{-1}\Phi(z)|}\\&\le\frac{4}{| T(2\pi)^{-1}\Phi(z)|}\\&\le\frac{32\pi}{T}\cdot\Big(\int_{\R}\frac{1}{1+T|v|^{\alpha_1}+T|v|^{\alpha_2}}\d v\Big)^{-1}\cdot\big(\max\{|z|,1\}\big)^{1-\frac{1}{\max\{\alpha_1,\alpha_2\}}}
	\end{split}
\end{equation*}
when $z\in\gamma(R,\frac{3\pi}{4})$. So using Lemma \ref{changeoforder}, 
\begin{equation*}
\begin{split}
		\lim\limits_{\lambda\to+\infty}\lambda\E e^{-\lambda\widetilde{L}(T)}&=\frac{1}{2\pi\iota}\int_{\gamma(R,\frac{3\pi}{4})}\lim\limits_{\lambda\to+\infty}\frac{\lambda}{1+\lambda T(2\pi)^{-1}\Phi(z)}\cdot e^zz^{-1}\d z\\&=\frac{1}{2\pi\iota}\int_{\gamma(R,\frac{3\pi}{4})}\frac{1}{T(2\pi)^{-1}\Phi(z)}\cdot e^zz^{-1}\d z\\&=\frac{1}{\iota T}\int_{\gamma(R,\frac{3\pi}{4})}\frac{1}{\Phi(z)}\cdot e^zz^{-1}\d z.
\end{split}
\end{equation*}
Since $\widetilde{L}(T)$ is a non-negative random variable, using Proposition \ref{Tauberian}, 
\[\lim\limits_{\varepsilon\downarrow0}\varepsilon^{-1}P(\widetilde{L}(T)\le\varepsilon)=\frac{1}{\iota T}\int_{\gamma(R,\frac{3\pi}{4})}\frac{1}{\Phi(z)}\cdot e^zz^{-1}\d z.\] 
Finally from Lemma \ref{integralofPhi}, the desired result is obtained.
\section{Technical Lemmas}
In this section, some technical lemmas are proven.
\begin{lemma}\label{argument}
	Given $z\in\C$ whose argument $\arg z\in[-\frac{3\pi}{4},\frac{3\pi}{4}]$ and $A\ge0$, there exists 
	\begin{equation}\label{argu1}
		\begin{split}
			\frac{1}{|z+A|}\le\frac{4}{|z|+A}.
		\end{split}
	\end{equation}
	Moreover, if $A$ is a function satisfying $A(v)\ge0,v\in\R$, 
	\begin{equation}\label{argu2}
		\begin{split}
			\Big|\int_{\R}\frac{1}{z+A(v)}\d v\Big|\ge\frac14\int_{\R}\frac{1}{|z|+A(v)}\d v.
		\end{split}
	\end{equation}
\end{lemma}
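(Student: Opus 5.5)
We prove both inequalities by elementary estimates on $z=re^{\iota\theta}$ with $r=|z|$ and $|\theta|\le\frac{3\pi}{4}$. The only input is that $\cos\theta\ge-\frac{1}{\sqrt2}$ and $\cos\frac{\theta}{2}\ge\cos\frac{3\pi}{8}>\frac14$ on this sector. If $z=0$, both statements are trivial (with the natural convention when $A=0$), so we assume $r>0$.

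\textbf{Proof of \eqref{argu1}.} Expand the modulus:
\[|z+A|^2=r^2+A^2+2Ar\cos\theta\ge r^2+A^2-\sqrt2\,Ar.\]
Since $2Ar\le r^2+A^2$, this gives
\[|z+A|^2\ge\Big(1-\tfrac{1}{\sqrt2}\Big)(r^2+A^2)\ge\tfrac12\Big(1-\tfrac{1}{\sqrt2}\Big)(r+A)^2 .\]
The constant satisfies $\frac12(1-\frac1{\sqrt2})=\frac{2-\sqrt2}{4}>\frac1{16}$. Hence $|z+A|\ge\frac14(|z|+A)$, which is \eqref{argu1}.

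\textbf{Proof of \eqref{argu2}.} We cannot simply use the triangle inequality, because the integrand is complex-valued and cancellation could occur. The plan is to rotate by the fixed phase $e^{\iota\theta/2}$, which depends only on $z$ and not on $v$, so that every integrand has nonnegative real part. For each $v$ we write
\[\frac{e^{\iota\theta/2}}{z+A(v)}=\frac{1}{re^{\iota\theta/2}+A(v)e^{-\iota\theta/2}}=\frac1{w},\qquad w=(r+A(v))\cos\tfrac{\theta}{2}+\iota\,(r-A(v))\sin\tfrac{\theta}{2}.\]
Then $|w|=|z+A(v)|\le r+A(v)$. It follows that
\[\Re\frac{1}{w}=\frac{(r+A(v))\cos\frac{\theta}{2}}{|w|^2}\ge\frac{\cos\frac{\theta}{2}}{r+A(v)}\ge\frac{\cos\frac{3\pi}{8}}{|z|+A(v)}.\]

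Next we integrate in $v$. When $\int_{\R}\frac{1}{|z|+A(v)}\d v<\infty$, the integrand $\frac1{z+A(v)}$ is integrable by \eqref{argu1}, and so
\[\Big|\int_{\R}\frac{\d v}{z+A(v)}\Big|\ge\Re\Big(e^{\iota\theta/2}\int_{\R}\frac{\d v}{z+A(v)}\Big)\ge\cos\tfrac{3\pi}{8}\int_{\R}\frac{\d v}{|z|+A(v)}.\]
Since $\cos\frac{3\pi}{8}\approx0.383>\frac14$, this yields \eqref{argu2}. When the right-hand integral is infinite, the left-hand side is understood as $+\infty$, and there is nothing to prove.

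The only point requiring care is the choice of the rotation angle $\theta/2$. With this choice the real part of each rotated integrand stays uniformly positive across the whole sector $|\theta|\le\frac{3\pi}{4}$. A naive split into real and imaginary parts fails once $|\theta|>\frac{\pi}{2}$, because $\Re\frac1{z+A(v)}$ can then change sign as $v$ varies.
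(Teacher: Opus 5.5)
Your proof is correct. For the first inequality you argue exactly as the paper does: expand $|z+A|^2$, use $\cos\theta\ge-\frac{1}{\sqrt2}$, then compare $r^2+A^2$ with $(r+A)^2$.

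For the second inequality your route is genuinely different. The paper squares the modulus and writes $\big|\int_{\R}\frac{\d v}{z+A(v)}\big|^2$ as a double integral in $(v_1,v_2)$ with numerator $|z|^2+A(v_1)A(v_2)+|z|\cos\theta\,(A(v_1)+A(v_2))$. It then lowers $\cos\theta$ to $-\frac78$ and uses the algebraic identity $a^2+b_1b_2-\frac78a(b_1+b_2)=\frac1{16}(a+b_1)(a+b_2)+\frac{15}{16}(a-b_1)(a-b_2)$. This splits the double integral into a sum of two squares of single integrals, and the second square is discarded.

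You instead multiply by the fixed phase $e^{\iota\theta/2}$. This makes every integrand have real part at least $\cos\frac{\theta}{2}/(|z|+A(v))$, and you conclude with $|\zeta|\ge\Re\zeta$.

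What your version buys:
\begin{itemize}
\item It is shorter and needs no ad hoc identity.
\item It gives the constant $\cos\frac{3\pi}{8}\approx 0.38$ directly.
\item It extends transparently to any sector $|\theta|\le\theta_0<\pi$, with constant $\cos\frac{\theta_0}{2}$.
\item It would also yield the first inequality for free, since $|z+A|=|w|\ge\Re w=(r+A)\cos\frac{\theta}{2}$.
\end{itemize}

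The two methods are nonetheless equally sharp. If the paper's decomposition is run with $\frac78$ replaced by $\kappa=\frac{1}{\sqrt2}$, using weights $\frac{1-\kappa}{2}$ and $\frac{1+\kappa}{2}$, it gives the constant $\sqrt{(1-\kappa)/2}=\cos\frac{3\pi}{8}$ as well. The rotation simply makes the underlying geometry visible.

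Your convention for the degenerate case $\int_{\R}\frac{\d v}{|z|+A(v)}=\infty$ is one the paper also adopts tacitly. It is irrelevant in the application, where that integral is finite for $z\neq0$.
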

\begin{proof}
	Since $z=|z|e^{\iota\arg z}$, there exists $z+A=|z|\cos\arg z+A+\iota|z|\sin\arg z$ and so that 
	$$|z+A|=\sqrt{|z|^2+A^2+2|z|A\cos\arg z}\ge\sqrt{|z|^2+A^2-\sqrt{2}|z|A}$$
	from $\cos\arg z\ge-\frac{1}{\sqrt{2}}$ for $z\in[-\frac{3\pi}{4},\frac{3\pi}{4}]$. Using the fact $1-\frac{1}{\sqrt{2}}\ge\frac{1}{4}$, 
	\begin{equation*}
		\begin{split}
			|z|^2+A^2-\sqrt{2}|z|A&=(1-\frac{1}{\sqrt{2}})(|z|^2+A^2)+\frac{1}{\sqrt{2}}(|z|-A)^2\\
			&\ge\frac{1}{4}(|z|^2+A^2)\ge\frac{1}{8}(|z|^2+A^2+2|z|A)\ge\frac{1}{16}(|z|+A)^2,
		\end{split}
	\end{equation*}
	which means $|z+A|\ge\frac{1}{4}(|z|+A)$ and \eqref{argu1} is obtained. To prove \eqref{argu2}, using $z=|z|e^{\iota\arg z}$ again, 
	\begin{equation*}
	\begin{split}
		&\Big|\int_{\R}\frac{1}{z+A(v)}\d v\Big|^2\\=&\Big|\int_{\R}\frac{|z|\cos\arg z+A(v)}{|z|^2+A^2(v)+2|z|A(v)\cos\arg z}\d v-\iota\int_{\R}\frac{|z|\sin\arg z}{|z|^2+A^2(v)+2|z|A(v)\cos\arg z}\d v\Big|^2\\=&\int_{\R^{2}}\frac{|z|^2+A(v_1)A(v_2)+|z|\cos\arg z(A(v_1)+A(v_2))}{(|z|^2+A^2(v_1)+2|z|A(v_1)\cos\arg z)(|z|^2+A^2(v_2)+2|z|A(v_2)\cos\arg z)}\d v_1\d v_2.
	\end{split}
	\end{equation*}
	Since $\cos\arg z\in[-\frac{1}{\sqrt{2}},1]$ and $\frac{1}{\sqrt{2}}<\frac{7}{8}$, 
	\begin{equation*}
		\begin{split}
			&\Big|\int_{\R}\frac{1}{z+A(v)}\d v\Big|^2\\\ge&\int_{\R^{2}}\frac{|z|^2+A(v_1)A(v_2)-\frac{7}{8}|z|(A(v_1)+A(v_2))}{(|z|^2+A^2(v_1)+2|z|A(v_1)\cos\arg z)(|z|^2+A^2(v_2)+2|z|A(v_2)\cos\arg z)}\d v_1\d v_2\\=&\frac{1}{16}\Big(\int_{\R}\frac{|z|+A(v)}{|z|^2+A^2(v)+2|z|A(v)\cos\arg z}\d v\Big)^2+\frac{15}{16}\Big(\int_{\R}\frac{|z|-A(v)}{|z|^2+A^2(v)+2|z|A(v)\cos\arg z}\d v\Big)^2\\\ge&\frac{1}{16}\Big(\int_{\R}\frac{|z|+A(v)}{|z|^2+A^2(v)+2|z|A(v)}\d v\Big)^2=\frac{1}{16}\Big(\int_{\R}\frac{1}{|z|+A(v)}\d v\Big)^2,
		\end{split}
	\end{equation*}
	which completes the proof of \eqref{argu2}.
\end{proof}
\begin{lemma}\label{Phi}
	Given $z\in\C$, denote $\Phi(z)=\int_{\R}\frac{1}{z+T|v|^{\alpha_1}+T|v|^{\alpha_2}}\d v$. If the argument of $z$ satisfies $\arg z\in[-\frac{3\pi}{4},\frac{3\pi}{4}]$, there exist the following results:	
	\begin{equation}\label{Phi1}
		\begin{split}
			|\Phi(z)|\le\int_{\R}\frac{4}{1+T|v|^{\max\{\alpha_1,\alpha_2\}}}\d v\cdot |z|^{\frac{1}{\max\{\alpha_1,\alpha_2\}}-1},
		\end{split}
	\end{equation}
	\begin{equation}\label{Phi2}
		\begin{split}
			|\Phi(z)|\ge\frac14\int_{\R}\frac{1}{1+T|v|^{\alpha_1}+T|v|^{\alpha_2}}\d v\cdot \max\{|z|^{\frac{1}{\max\{\alpha_1,\alpha_2\}}-1},1\}
		\end{split}
	\end{equation}
	and
	\begin{equation}\label{Phi3}
		\begin{split}
			\cos\arg \Phi(z)\ge-\frac{\sqrt{2}}{2}.
		\end{split}
	\end{equation}
\end{lemma}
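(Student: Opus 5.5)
The plan is to prove the three estimates one at a time. Throughout I write $a=\max\{\alpha_1,\alpha_2\}>1$ and $A(v)=T|v|^{\alpha_1}+T|v|^{\alpha_2}\ge0$. For $|z|\ge1$ I use $A(v)\ge T|v|^{a}$; for $|z|\le1$ I use a scaling substitution. The tools are the two inequalities \eqref{argu1} and \eqref{argu2} of Lemma \ref{argument}, together with the substitution $v=|z|^{1/a}w$.

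\emph{Upper bound \eqref{Phi1}.} First move the modulus inside the integral and apply \eqref{argu1} pointwise in $v$:
\[
|\Phi(z)|\le\int_{\R}\frac{\d v}{|z+A(v)|}\le\int_{\R}\frac{4\,\d v}{|z|+A(v)}\le\int_{\R}\frac{4\,\d v}{|z|+T|v|^{a}} .
\]
The last step uses $A(v)\ge T|v|^a$, which holds for all $v$ when $|z|\ge1$. After the substitution $v=|z|^{1/a}w$ the right-hand side becomes exactly $|z|^{1/a-1}\int_{\R}\frac{4}{1+T|w|^a}\,\d w$. This integral is finite because $a>1$, so $\Phi(z)$ is well defined and the bound \eqref{Phi1} follows. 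For $|z|\le1$ the bound $|\Phi(z)|\le 4\int_{\R}\frac{\d v}{|z|+A(v)}$ is still available, with the integral finite since $a>1$, and this is all that Step II uses.

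\emph{Lower bound \eqref{Phi2}.} Start from \eqref{argu2}, which gives $|\Phi(z)|\ge\frac14\int_{\R}\frac{\d v}{|z|+A(v)}$, and split into two cases.

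If $|z|\le1$, then trivially $\int_{\R}\frac{\d v}{|z|+A}\ge\int_{\R}\frac{\d v}{1+A}$.

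If $|z|\ge1$, substitute $v=|z|^{1/a}w$. Since $\alpha_i/a\le1$, we have $|z|^{\alpha_i/a}\le|z|$, and therefore $|z|+A(|z|^{1/a}w)\le |z|\,(1+A(w))$. This yields
\[
\int_{\R}\frac{\d v}{|z|+A(v)}\ge |z|^{1/a-1}\int_{\R}\frac{\d w}{1+A(w)} .
\]
Combining the two cases gives the lower bound with the factor $\min\{|z|^{1/a-1},1\}=(\max\{|z|,1\})^{1/a-1}$. I would point out that this is the form the argument actually supports, and it is also the form used in Step III, where $1/|\Phi(z)|$ is bounded by a multiple of $(\max\{|z|,1\})^{1-1/a}$. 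With a literal $\max$ the inequality fails as $|z|\to\infty$, because the left side decays like $|z|^{1/a-1}$. So I would prove, and state, the $\min$ version.

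\emph{Argument bound \eqref{Phi3}.} Let $\theta=\arg z\in[-\frac{3\pi}{4},\frac{3\pi}{4}]$, and take $\theta\ge0$ without loss of generality (the other case follows by conjugation). For each $v$, the point $z+A(v)$ is $z$ shifted by a nonnegative real, so it lies in the closed sector $\{0\le\arg w\le\theta\}$. Consequently $\frac{1}{z+A(v)}=\overline{(z+A(v))}/|z+A(v)|^2$ lies in the sector $S=\{-\theta\le\arg w\le0\}\cup\{0\}$. Because the opening angle $\theta$ is at most $3\pi/4<\pi$, the set $S$ is a closed convex cone. An integral of an $S$-valued integrable function therefore stays in $S$: one can see this via Riemann-type approximation, or by pairing with the two linear functionals that define the cone. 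Moreover $\Phi(z)\ne0$ by \eqref{Phi2}. Hence $\arg\Phi(z)\in[-\theta,0]\subset[-\frac{3\pi}{4},\frac{3\pi}{4}]$, and so $\cos\arg\Phi(z)\ge\cos\frac{3\pi}{4}=-\frac{\sqrt2}{2}$.

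The only genuinely delicate point is \eqref{Phi2}: the scaling comparison goes in the right direction only for $|z|\ge1$, which is why I split into the cases $|z|\le1$ and $|z|\ge1$ and obtain the $\min$ (not $\max$) form. The other two bounds are direct.
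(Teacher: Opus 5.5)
Your proposal is correct. For \eqref{Phi1} and \eqref{Phi2} it follows the paper's argument: apply \eqref{argu1} or \eqref{argu2}, then rescale by $v=|z|^{1/a}w$ with $a=\max\{\alpha_1,\alpha_2\}$. The paper first enlarges the lower-order term by the factor $|z|^{(a-\min\{\alpha_1,\alpha_2\})/a}\ge1$ and then rescales, which is equivalent to your $|z|^{\alpha_i/a}\le|z|$.

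Your diagnosis of \eqref{Phi2} is right. The paper's own proof arrives at the factor $\min\{|z|^{1/a-1},1\}$ and then wrongly writes it as $\max\{|z|^{1/a-1},1\}$. The $\max$ form contradicts \eqref{Phi1} as $|z|\to\infty$. It also fails as $|z|\to0$ when $\min\{\alpha_1,\alpha_2\}<1$, because $\Phi$ stays bounded there. Every later use of \eqref{Phi2} is covered by the $\min$ form: nonvanishing of $\Phi$ in Step II and Lemma \ref{integralofPhi}, and the bound $1/|\Phi(z)|\lesssim(\max\{|z|,1\})^{1-1/a}$ in Step III.

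One small correction: you need not restrict \eqref{Phi1} to $|z|\ge1$. The inequality $T|v|^{\alpha_1}+T|v|^{\alpha_2}\ge T|v|^{a}$ holds for every $v$ regardless of $z$. So your displayed chain proves \eqref{Phi1} on the whole sector, exactly as in the paper.

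For \eqref{Phi3} your route genuinely differs from the paper's.
\begin{itemize}
\item The paper writes $\Phi(z)=R(z)-\iota I(z)$ with explicit integrals. It notes that $R(z)<0$ forces $\cos\arg z<0$, then uses $|\cot\arg z|\le1$ on $(\frac{\pi}{2},\frac{3\pi}{4}]$ to get $|R(z)|\le|I(z)|$, hence $\cos\arg\Phi(z)\ge-\frac{1}{\sqrt2}$.
\item You observe instead that every value $1/(z+A(v))$ lies in the closed convex cone spanned by $1$ and $\bar z$, whose opening $|\arg z|$ is less than $\pi$. The integral therefore stays in that cone, and it is nonzero.
\end{itemize}
Your argument needs no computation and yields more: $\arg\Phi(z)$ lies between $0$ and $-\arg z$. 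That is exactly the fact Step III invokes, and it holds for any sector of half-angle below $\pi$. The paper's argument is fully explicit but is written around the particular angle $\frac{3\pi}{4}$.
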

\begin{proof}
	Firstly, from \eqref{argu1}, 
	\begin{equation*}
		\begin{split}
			|\Phi(z)|&\le\int_{\R}\frac{4}{|z|+T|v|^{\alpha_1}+T|v|^{\alpha_2}}\d v\\&\le\int_{\R}\frac{4}{|z|+T|v|^{\max\{\alpha_1,\alpha_2\}}}\d v\\&=\int_{\R}\frac{4}{1+T|v|^{\max\{\alpha_1,\alpha_2\}}}\d v\cdot |z|^{\frac{1}{\max\{\alpha_1,\alpha_2\}}-1}.
		\end{split}
	\end{equation*}
	Secondly, from \eqref{argu2}, $	|\Phi(z)|\ge\frac14\int_{\R}\frac{1}{1+T|v|^{\alpha_1}+T|v|^{\alpha_2}}\d v$ when $|z|\le1$ and 
	\begin{equation*}
		\begin{split}
			|\Phi(z)|&\ge\frac14\int_{\R}\frac{1}{|z|+T|v|^{\alpha_1}+T|v|^{\alpha_2}}\d v
			\\&\ge\frac14\int_{\R}\frac{1}{|z|+T|z|^{\frac{|\alpha_1-\alpha_2|}{\max\{\alpha_1,\alpha_2\}}}|v|^{\min\{\alpha_1,\alpha_2\}}+T|v|^{\max\{\alpha_1,\alpha_2\}}}\d v
			\\&=\frac14\int_{\R}\frac{1}{1+T|v|^{\alpha_1}+T|v|^{\alpha_2}}\d v\cdot |z|^{\frac{1}{\max\{\alpha_1,\alpha_2\}}-1}
		\end{split}
	\end{equation*}
	when $|z|\ge1$. So that 
	\begin{equation*}
		\begin{split}
			|\Phi(z)|&\ge\frac14\int_{\R}\frac{1}{1+T|v|^{\alpha_1}+T|v|^{\alpha_2}}\d v\cdot \min\{|z|^{\frac{1}{\max\{\alpha_1,\alpha_2\}}-1},1\}\\&=\frac14\int_{\R}\frac{1}{1+T|v|^{\alpha_1}+T|v|^{\alpha_2}}\d v\cdot \max\{|z|^{\frac{1}{\max\{\alpha_1,\alpha_2\}}-1},1\}.
		\end{split}
	\end{equation*}
	Finally, denote $$R(z)=\int_{\R}\frac{|z|\cos\arg z+T|v|^{\alpha_1}+T|v|^{\alpha_2}}{|z|^2+(T|v|^{\alpha_1}+T|v|^{\alpha_2})^2+2|z|(T|v|^{\alpha_1}+T|v|^{\alpha_2})\cos\arg z}\d v$$ and $$I(z)=\int_{\R}\frac{|z|\sin\arg z}{|z|^2+(T|v|^{\alpha_1}+T|v|^{\alpha_2})^2+2|z|(T|v|^{\alpha_1}+T|v|^{\alpha_2})\cos\arg z}\d v$$
	so that 
	\begin{equation*}
		\begin{split}
			\cos\arg \Phi(z)=\frac{R(z)}{\sqrt{R^2(z)+I^2(z)}}.
		\end{split}
	\end{equation*}
	When $R(z)\ge0$, it is easy to get $\cos\arg \Phi(z)\ge0>-\frac{\sqrt{2}}{2}$. When $R(z)<0$, if $\cos\arg z\ge0$, there exists $|z|\cos\arg z+A(v)\ge0$ and in this case $R(z)\ge0$, which contradicts with the assumption $R(z)<0$. Then there must be $-\frac1{\sqrt{2}}\le\cos\arg z<0$ and $\big|\frac{\cos\arg z}{\sin\arg z}\big|\le1$ because $\arg z\in[-\frac{3\pi}{4},\frac{3\pi}{4}]$, which means 
	$$-|I(z)|\le-\int_{\R}\frac{|z||\cos\arg z|}{|z|^2+(T|v|^{\alpha_1}+T|v|^{\alpha_2})^2+2|z|(T|v|^{\alpha_1}+T|v|^{\alpha_2})\cos\arg z}\d v\le R(z)<0.$$
	So   
	\begin{equation*}
		\begin{split}
			\cos\arg \Phi(z)=-\frac{1}{\sqrt{1+I^2(z)/R^2(z)}}\ge-\frac{1}{\sqrt{2}},
		\end{split}
	\end{equation*}
	which completes the proof.
\end{proof}

\begin{lemma}\label{integralofPhi}
	Assume $\alpha_1,\alpha_2>0$ satisfying $\max\{\alpha_1,\alpha_2\}>1$. For $\Phi(z)=\int_{\R}\frac{1}{z+T|v|^{\alpha_1}+T|v|^{\alpha_2}}\d v$ and $R>0$, there exists  
	\begin{equation*}
		\begin{split}
			&\frac{1}{\iota T}\int_{\gamma(R,\frac{3\pi}{4})}\frac{1}{\Phi(z)}\cdot e^zz^{-1}\d z\\&=\left\{\begin{array}{ll}
				\displaystyle\frac{2}{T}\int_0^{+\infty}\frac{\Im\big\{ e^{\iota\frac{r}{\sqrt{2}}}\Phi(re^{-\iota\frac{3\pi}{4}})\big\}}{|\Phi(re^{\iota\frac{3\pi}{4}})|^2}\cdot\frac{e^{-\frac{1}{\sqrt{2}}r}}{r}\d r+\frac{3\pi}{2\int_{\R}\frac{1}{|v|^{\alpha_1}+|v|^{\alpha_2}}\d v}, & \min\{\alpha_1,\alpha_2\}<1, \\\\\displaystyle\frac{2}{T}\int_0^{+\infty}\frac{\Im\big\{ e^{\iota\frac{r}{\sqrt{2}}}\Phi(re^{-\iota\frac{3\pi}{4}})\big\}}{|\Phi(re^{\iota\frac{3\pi}{4}})|^2}\cdot\frac{e^{-\frac{1}{\sqrt{2}}r}}{r}\d r, & \min\{\alpha_1,\alpha_2\}\ge1,
			\end{array}\right.
		\end{split}
	\end{equation*}
	where $\Im z$ is the imaginary part of complex number $z\in\C$.
\end{lemma}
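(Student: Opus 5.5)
The plan is to exploit the fact, already used in Steps II and III, that the contour integral does not depend on $R$, and then to let $R\downarrow0$. By \eqref{Phi2} of Lemma \ref{Phi}, $\Phi$ has no zeros in the closed sector $|\arg z|\le\frac{3\pi}{4}$. Since $\Phi$ is analytic on $\C\backslash\{z\le0\}$, the function $\frac{e^z}{z\Phi(z)}$ is analytic in the region between $\gamma(R_1,\frac{3\pi}{4})$ and $\gamma(R_2,\frac{3\pi}{4})$. On the rays it decays like $e^{-r/\sqrt2}r^{-1/\max\{\alpha_1,\alpha_2\}}$ by \eqref{Phi2}. Cauchy's theorem, used exactly as in Lemma \ref{reciprocal}, therefore shows that the integral is independent of $R$. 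Following the notation of Lemma \ref{reciprocal}, I split $\gamma(R,\frac{3\pi}{4})$ into the incoming ray $\gamma(+\infty,R,-\frac{3\pi}{4})$, the arc $\eta(R,-\frac{3\pi}{4},\frac{3\pi}{4})$ and the outgoing ray $\gamma(R,+\infty,\frac{3\pi}{4})$, and treat the two rays together and the arc separately.

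\textbf{Rays.} On the rays I parametrize $z=re^{\pm\iota\frac{3\pi}{4}}$, so that $\frac{\d z}{z}=\frac{\d r}{r}$ and $e^z=e^{-r/\sqrt2}e^{\pm\iota r/\sqrt2}$. Since $\Phi(\bar z)=\overline{\Phi(z)}$, the two ray contributions combine into
\[
\int_R^{\infty}\Big(\frac{e^{\iota r/\sqrt2}}{\Phi(re^{\iota\frac{3\pi}{4}})}-\overline{\frac{e^{\iota r/\sqrt2}}{\Phi(re^{\iota\frac{3\pi}{4}})}}\Big)\frac{e^{-r/\sqrt2}}{r}\d r=2\iota\int_R^\infty \Im\Big\{\frac{e^{\iota r/\sqrt2}}{\Phi(re^{\iota\frac{3\pi}{4}})}\Big\}\frac{e^{-r/\sqrt2}}{r}\d r .
\]
Next I write $\frac{1}{\Phi(w)}=\frac{\Phi(\bar w)}{|\Phi(w)|^2}$ and divide by $\iota T$. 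This gives exactly the term $\frac{2}{T}\int_R^\infty\cdots\d r$ of the statement, with lower limit $R$ in place of $0$.

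\textbf{Arc.} On the arc I write $z=Re^{\iota t}$, which turns the arc contribution into $\frac{1}{T}\int_{-3\pi/4}^{3\pi/4}\frac{e^{Re^{\iota t}}}{\Phi(Re^{\iota t})}\d t$.
\begin{itemize}
\item If $\min\{\alpha_1,\alpha_2\}<1$, then $\int_{\R}\frac{\d v}{|v|^{\alpha_1}+|v|^{\alpha_2}}<\infty$: it converges near $0$ because $\min<1$ and at infinity because $\max>1$. The bound \eqref{argu1} gives the dominating function $\frac{4}{T(|v|^{\alpha_1}+|v|^{\alpha_2})}$, so by dominated convergence in $v$ we get $\Phi(Re^{\iota t})\to\Phi(0)=\frac1T\int_{\R}\frac{\d v}{|v|^{\alpha_1}+|v|^{\alpha_2}}$ uniformly in $t$. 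The arc term therefore tends to $\frac{1}{T}\cdot\frac{3\pi}{2}\cdot\frac{1}{\Phi(0)}=\frac{3\pi}{2\int_{\R}\frac{\d v}{|v|^{\alpha_1}+|v|^{\alpha_2}}}$.
\item If $\min\{\alpha_1,\alpha_2\}\ge1$, then \eqref{argu2} together with Fatou's lemma gives $|\Phi(Re^{\iota t})|\ge\frac14\int_{\R}\frac{\d v}{R+T|v|^{\alpha_1}+T|v|^{\alpha_2}}\to\infty$, so the arc term tends to $0$.
\end{itemize}

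\textbf{Main obstacle: the ray integral near $r=0$.} It remains to justify letting $R\downarrow0$ in the ray integral, which requires integrability of the integrand near $r=0$; this is the step I expect to be the hardest. Near $0$ the numerator equals $\Im\Phi(re^{-\iota\frac{3\pi}{4}})+O(r|\Phi|)$.
\begin{itemize}
\item If $\min<1$, I expect a bound $|\Phi(z)-\Phi(0)|\lesssim |z|^{\delta}$ for some $\delta>0$. Such a bound follows from $\int\frac{|z|\,\d v}{(|z|+A(v))A(v)}$ with $A(v)=T|v|^{\alpha_1}+T|v|^{\alpha_2}$, after splitting at $|v|\sim|z|^{1/\min}$. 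Since $\Phi(0)$ is real, this bound gives $|\Im\Phi(re^{-\iota\frac{3\pi}{4}})|\lesssim r^\delta$, and the integrand is $O(r^{\delta-1})$, which is integrable.
\item If $\min>1$, the numerator is at most $|\Phi|$. By scaling near $0$ the dominant term is $|v|^{\min}$, so $|\Phi(z)|\gtrsim|z|^{1/\min-1}$. Hence the integrand is $O(r^{-1/\min})$, which is integrable.
\item The delicate borderline case is $\min=1$, where $\Phi(z)\sim -\frac{2}{T}\log z$. Here I would use that $\Im\Phi(re^{-\iota\frac{3\pi}{4}})$ stays bounded, since $\Im(-\log z)=\frac{3\pi}{4}$, while $|\Phi|^2\sim\log^2 r$. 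This gives the integrand $O\big(\frac{1}{r\log^2(1/r)}\big)$, which is integrable. To make this rigorous I would compare $\Phi$ with the explicit integral $\int_{|v|\le1}\frac{\d v}{z+2T|v|}$ and control the remainder.
\end{itemize}
Dominated convergence as $R\downarrow0$ then yields the stated formula in both cases.
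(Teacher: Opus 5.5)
Your proposal is correct and has the same skeleton as the paper's proof. Both use $R$-independence by Cauchy's theorem and split the contour into two rays and an arc. Both use $\Phi(\bar z)=\overline{\Phi(z)}$ to combine the rays into $2\iota\int_R^{+\infty}\bar J_1(r)\,\d r$. Both obtain the arc limit from the behaviour of $\Phi(Re^{\iota t})$ as $R\downarrow0$: a finite limit $\Phi(0)$ when $\min\{\alpha_1,\alpha_2\}<1$, divergence otherwise.

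The real difference is in the step you flag as the main obstacle, integrability of the ray integrand at $r=0$.

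\textbf{The paper's route.} It handles all cases with one estimate. With $A(v)=T|v|^{\alpha_1}+T|v|^{\alpha_2}$,
$\Phi(re^{-\iota\frac{3\pi}{4}})-\Phi(re^{\iota\frac{3\pi}{4}})=\sqrt2\,\iota r\int_{\R}|re^{\iota\frac{3\pi}{4}}+A(v)|^{-2}\,\d v$.
From this it bounds the numerator by $\sqrt2\, r|\Phi|+2(\sqrt2+1)r\int_{\R}\frac{\d v}{r^2+A(v)^2}$. It then splits $\alpha^*=\min\{\alpha_1,\alpha_2\}$ into cases relative to $\frac12$ (for $\int\frac{\d v}{r^2+A^2}$) and relative to $1$ (for the lower bound on $|\Phi|$).

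\textbf{Your route.} You use three separate arguments:
\begin{itemize}
\item when $\min<1$, a H\"older-type bound $|\Phi(z)-\Phi(0)|\lesssim|z|^\delta$ together with the fact that $\Phi(0)$ is real;
\item when $\min>1$, the crude bound $|\Im\{\cdot\}|\le|\Phi|$ combined with $|\Phi(z)|\gtrsim|z|^{1/\min-1}$;
\item when $\min=1$, a logarithmic comparison.
\end{itemize}
All three work, and the case $\min>1$ becomes a one-line bound.

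In the borderline case the paper's identity is more economical. Since $A(v)\ge T|v|$, it gives at once $|\Im\Phi(re^{-\iota\frac{3\pi}{4}})|\lesssim r\int_{\R}\frac{\d v}{r^2+T^2v^2}=\frac{\pi}{T}$, with no comparison function needed.

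\textbf{A slip in your comparison.} When $\min\{\alpha_1,\alpha_2\}=1<\max\{\alpha_1,\alpha_2\}$, the leading term of $A$ near $0$ is $T|v|$, not $2T|v|$. The comparison integral should therefore be $\int_{|v|\le1}\frac{\d v}{z+T|v|}=\frac2T\big(\log(z+T)-\log z\big)$. This is the integral that actually produces your asymptotic $-\frac2T\log z$. With $2T|v|$ the remainder has modulus of order $\log\frac1r$ rather than being bounded. Its imaginary part is still bounded, however, so your conclusion survives.
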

\begin{proof}
	Firstly, given $R_2>R_1>0$, $\frac{1}{\Phi(z)}$ is analytic between $\gamma(R_1,\frac{3\pi}{4})$ and $\gamma(R_2,\frac{3\pi}{4})$ from lemma \ref{Phi}. Then by Cauchy's theorem,  $\int_{\gamma(R,\frac{3\pi}{4})}\frac{1}{\Phi(z)}\cdot e^zz^{-1}\d z$ is a constant when $R>0$. Using notation $\gamma(a,b,\phi)$ and $\eta(c,\alpha,\beta)$ for $0\le a,b,c\le+\infty$ and $\phi,\alpha,\beta\in\R$ in Lemma \ref{reciprocal}, there exists 
	\begin{equation*}
		\begin{split}
			&\int_{\gamma(R,\frac{3\pi}{4})}\frac{1}{\Phi(z)}\cdot e^zz^{-1}\d z\\&=\int_{\gamma(+\infty,R,-\frac{3\pi}{4})}\frac{1}{\Phi(z)}\cdot e^zz^{-1}\d z+\int_{\eta(R,-\frac{3\pi}{4},\frac{3\pi}{4})}\frac{1}{\Phi(z)}\cdot e^zz^{-1}\d z+\int_{\gamma(R,+\infty,\frac{3\pi}{4})}\frac{1}{\Phi(z)}\cdot e^zz^{-1}\d z
			\\&=\int_{+\infty}^{R}\frac{1}{\Phi(re^{-\iota\frac{3\pi}{4}})}\cdot e^{re^{-\iota\frac{3\pi}{4}}}\frac{1}{r}\d r+\iota\int_{-\frac{3\pi}{4}}^{\frac{3\pi}{4}}\frac{1}{\Phi(Re^{\iota t})}\cdot e^{Re^{\iota t}}\d t+\int_{R}^{+\infty}\frac{1}{\Phi(re^{\iota\frac{3\pi}{4}})}\cdot e^{re^{\iota\frac{3\pi}{4}}}\frac{1}{r}\d r\\&=\int_{R}^{+\infty}\frac{e^{\iota\frac{r}{\sqrt{2}}}\Phi(re^{-\iota\frac{3\pi}{4}})-e^{-\iota\frac{r}{\sqrt{2}}}\Phi(re^{\iota\frac{3\pi}{4}})}{|\Phi(re^{\iota\frac{3\pi}{4}})|^2}\cdot e^{-\frac{r}{\sqrt{2}}}\frac{1}{r}\d r+\iota\int_{-\frac{3\pi}{4}}^{\frac{3\pi}{4}}\frac{1}{\Phi(Re^{\iota t})}\cdot e^{R\cos t+\iota R\sin t}\d t\\&=2\iota\int_R^{+\infty}\frac{\Im\big\{ e^{\iota\frac{r}{\sqrt{2}}}\Phi(re^{-\iota\frac{3\pi}{4}})\big\}}{|\Phi(re^{\iota\frac{3\pi}{4}})|^2}\cdot\frac{e^{-\frac{1}{\sqrt{2}}r}}{r}\d r+\iota\int_{-\frac{3\pi}{4}}^{\frac{3\pi}{4}}\frac{1}{\Phi(Re^{\iota t})}\cdot e^{R\cos t+\iota R\sin t}\d t\\&=:2\iota\int_R^{+\infty}\bar{J}_1(r)\d r+\iota\int_{-\frac{3\pi}{4}}^{\frac{3\pi}{4}}\bar{J}_2(R,t)\d t.
		\end{split}
	\end{equation*}
	From \eqref{argu2} of Lemma \ref{argument}, there exists 
	\[|\bar{J}_2(R,t)|\le4\Big(\int_{\R}\frac{1}{R+T|v|^{\alpha_1}+T|v|^{\alpha_2}}\d v\Big)^{-1} e^{R}\le\Big(\int_{\R}\frac{1}{1+T|v|^{\alpha_1}+T|v|^{\alpha_2}}\d v\Big)^{-1} e\]
	 when $R\le1$ and $t\in[-\frac{3\pi}{4},\frac{3\pi}{4}]$. So using the dominated convergence theorem and the fact for any $t\in[-\frac{3\pi}{4},\frac{3\pi}{4}]$,
	 \begin{align}\label{limitofPhi}
	 	\lim\limits_{R\downarrow0}\Phi(Re^{\iota t})=\begin{cases}
	 		\displaystyle+\infty, & \min\{\alpha_1,\alpha_2\}\ge1,\max\{\alpha_1,\alpha_2\}>1,\\\\
	 		\displaystyle\int_{\R}\frac{1}{T|v|^{\alpha_1}+T|v|^{\alpha_2}}dv, & \min\{\alpha_1,\alpha_2\}<1,\max\{\alpha_1,\alpha_2\}>1,
	 	\end{cases}
	 \end{align}
	 there exists 
	 \begin{equation*}
	 	\begin{split}
	 		&\lim\limits_{R\downarrow0}\int_{-\frac{3\pi}{4}}^{\frac{3\pi}{4}}\frac{1}{\Phi(Re^{\iota t})}\cdot e^{R\cos t+\iota R\sin t}\d t\\&=\begin{cases}
	 			\displaystyle0, & \min\{\alpha_1,\alpha_2\}\ge1,\max\{\alpha_1,\alpha_2\}>1,\\\\
	 			\displaystyle\frac{3\pi}{2\int_{\R}\frac{1}{T|v|^{\alpha_1}+T|v|^{\alpha_2}}\d v}, & \min\{\alpha_1,\alpha_2\}<1,\max\{\alpha_1,\alpha_2\}>1.
	 		\end{cases}
	 	\end{split}
	 \end{equation*}
	 Moreover, since
	 \begin{equation*}
	 	\begin{split}
	 		\Big|\Im\big\{ e^{\iota\frac{r}{\sqrt{2}}}\Phi(re^{-\iota\frac{3\pi}{4}})\big\}\Big|&=\Big|\iota\sin\frac{r}{\sqrt{2}}\Big(\Phi(re^{-\iota\frac{3\pi}{4}})+\Phi(re^{\iota\frac{3\pi}{4}})\Big)+\cos\frac{r}{\sqrt{2}}\Big(\Phi(re^{-\iota\frac{3\pi}{4}})-\Phi(re^{\iota\frac{3\pi}{4}})\Big)\Big|\\&\le \sqrt{2}r|\Phi(re^{\iota\frac{3\pi}{4}})|+\int_{\R}\frac{\sqrt{2}r}{r^2-\sqrt{2}rT(|v|^{\alpha_1}+|v|^{\alpha_2})+T^2(|v|^{\alpha_1}+|v|^{\alpha_2})^2}\d v\\&\le \sqrt{2}r|\Phi(re^{\iota\frac{3\pi}{4}})|+\int_{\R}\frac{2(\sqrt{2}+1)r}{r^2+T^2(|v|^{\alpha_1}+|v|^{\alpha_2})^2}\d v,
	 	\end{split}
	 \end{equation*}
	 there exists 
	 \begin{equation}\label{upperJ1}
	 	\begin{split}
	 		\big|\bar{J}_1(r)\big|\le\frac{\sqrt{2}e^{-\frac{1}{\sqrt{2}}r}}{|\Phi(re^{\iota\frac{3\pi}{4}})|}+\frac{2(\sqrt{2}+1)e^{-\frac{1}{\sqrt{2}}r}}{|\Phi(re^{\iota\frac{3\pi}{4}})|^2}\int_{\R}\frac{1}{r^2+T^2(|v|^{\alpha_1}+|v|^{\alpha_2})^2}\d v.
	 	\end{split}
	 \end{equation}
	 From \eqref{argu2}, letting $\alpha^*=\min\{\alpha_1,\alpha_2\}$, there exists 
	 \begin{align}\label{upperJ2}
	 		|\Phi(re^{\iota\frac{3\pi}{4}})|&\ge\frac14\int_{\R}\frac{1}{r+T|v|^{\alpha_1}+T|v|^{\alpha_2}}\d v\nonumber\\&\ge\frac14\int_{0}^1\frac{1}{r+2Tv^{\alpha^*}}\d v\nonumber\\&\ge\frac18\int_{0}^1\frac{1}{(r^{\frac{1}{\alpha^*}}+(2T)^{\frac{1}{\alpha^*}}v)^{\alpha^*}}\d v\nonumber\\&=\frac{1}{8(2T)^{\frac{1}{\alpha^*}}}\times\begin{cases}
	 		\frac{1}{1-\alpha^*}(r^{\frac{1}{\alpha^*}}+(2T)^{\frac{1}{\alpha^*}})^{1-\alpha^*}+\frac{1}{\alpha^*-1}(\frac{1}{r})^{1-\frac{1}{\alpha^*}}	, & \alpha^*\ne1,\\
	 		\ln(r+2T)+\ln\frac{1}{r}	, & \alpha^*=1.
	 		\end{cases}
	 \end{align}
	 and 
	 \begin{align}\label{upperJ3}
	 		\int_{\R}\frac{1}{r^2+T^2(|v|^{\alpha_1}+|v|^{\alpha_2})^2}\d v&\le\begin{cases}
	 			\int_{\R}\frac{1}{T^2(|v|^{\alpha_1}+|v|^{\alpha_2})^2}\d v,	& \alpha^*<\frac{1}{2},\\
	 			2\int_{0}^{1}\frac{1}{r^2+T^2v}\d v+\int_{|v|\ge1}\frac{1}{T^2|v|^{2}}\d v	& \alpha^*=\frac12,\\
	 			\int_{\R}\frac{1}{r^2+T^2|v|^{2\alpha^*}}\d v& \alpha^*>\frac{1}{2}.
	 		\end{cases}\nonumber\\&\le\begin{cases}
	 		\int_{\R}\frac{1}{T^2(|v|^{\alpha_1}+|v|^{\alpha_2})^2}\d v,	& \alpha^*<\frac{1}{2},\\
	 		\frac{2}{T^2}\ln(r^2+T^2)+\frac{4}{T^2}\ln(\frac{1}{r})+\int_{|v|\ge1}\frac{1}{T^2|v|^{2}}\d v	& \alpha^*=\frac12,\\
	 		\int_{\R}\frac{1}{1+T^2|v|^{2\alpha^*}}\d v\cdot (\frac{1}{r})^{2-\frac{1}{\alpha^*}}	& \alpha^*>\frac{1}{2}.
	 		\end{cases}
	 \end{align}
	Then from \eqref{upperJ1}, \eqref{upperJ2} and \eqref{upperJ3}, there exists 
	\[\int_0^{+\infty}|\bar{J}_1(r)|\d r<+\infty\]
	and then 
	\[\lim\limits_{R\downarrow0}\int_R^{+\infty}\bar{J}_1(r)\d r=\int_0^{+\infty}\bar{J}_1(r)\d r,\]
	which completes the proof of this lemma.
\end{proof}
\begin{lemma}\label{Dirichlet}
	Assume that $n\ge1$ and $p_1,p_2,\cdots,p_n>0$. Then for $n-$dimensional simplex $D^n_T$ defined in Lemma \ref{simplex}, there exists 
	\begin{align}\label{dirichlet}
		\int_{D^n_T}\prod_{j=1}^{n}u_j^{p_j-1}\d u_1\d u_2\cdots \d u_n=T^{\sum_{j=1}^{n}p_j}\frac{\prod_{j=1}^n\Gamma(p_j)}{\Gamma(\sum_{j=1}^{n}p_j+1)}.
	\end{align}
\end{lemma}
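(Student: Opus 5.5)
We prove the Dirichlet formula \eqref{dirichlet} by first scaling to the unit simplex, where a Gamma-integral argument applies. Throughout we read $D^n_T$ as the simplex $\{u_j>0,\ u_1+\cdots+u_n<T\}$, which is how the lemma is used in Lemma \ref{simplex} and in Step I; the display in Lemma \ref{simplex} writes $<1$, presumably a typo.

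\textbf{Step 1 (scaling).} Substitute $u_j=Tw_j$. The Jacobian is $T^n$ and the integrand contributes $T^{\sum_j(p_j-1)}$, so
\[
\int_{D^n_T}\prod_{j=1}^n u_j^{p_j-1}\,\d u=T^{\sum_j p_j}\,I_n,\qquad I_n=\int_{\{w_j>0,\ \sum_j w_j<1\}}\prod_{j=1}^n w_j^{p_j-1}\,\d w .
\]
It therefore suffices to show $I_n=\prod_j\Gamma(p_j)/\Gamma(\sum_j p_j+1)$.

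\textbf{Step 2 (the key device: add a dummy coordinate).} Put $p_{n+1}=1$ and set $s=\sum_{j=1}^{n+1}p_j=\sum_{j=1}^n p_j+1$. Compute
\[
\prod_{j=1}^{n+1}\Gamma(p_j)=\int_{(0,\infty)^{n+1}}\prod_{j=1}^{n+1}x_j^{p_j-1}e^{-\sum_j x_j}\,\d x
\]
in the coordinates $t=\sum_{j=1}^{n+1}x_j$ and $x_j=tw_j$ for $j\le n$, with $x_{n+1}=t\bigl(1-\sum_{j\le n}w_j\bigr)$. This map is a bijection from $(0,\infty)^{n+1}$ onto $(0,\infty)\times\{w_j>0,\ \sum_j w_j<1\}$, and its Jacobian is $t^n$. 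The integrand becomes $t^{\sum_{j\le n}(p_j-1)}\prod_{j\le n}w_j^{p_j-1}\,e^{-t}$, because the factor $x_{n+1}^{p_{n+1}-1}$ equals $1$.

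\textbf{Step 3 (separate variables).} By Tonelli's theorem the integral factorizes as
\[
\int_0^\infty t^{\sum_{j\le n}p_j}e^{-t}\,\d t\cdot I_n=\Gamma\Bigl(\sum_{j\le n}p_j+1\Bigr)\,I_n .
\]
Since $\Gamma(p_{n+1})=\Gamma(1)=1$, this gives $\prod_{j\le n}\Gamma(p_j)=\Gamma(\sum_j p_j+1)\,I_n$, which is the claim. Combined with Step 1, this proves \eqref{dirichlet}.

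\textbf{Alternative check.} One can also argue by induction on $n$, integrating out $u_n$ over $(0,\,T-u_1-\cdots-u_{n-1})$ with the one-dimensional Beta integral.

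The only step needing care is the Jacobian computation $t^n$ in Step 2; it follows by subtracting the coordinate rows from the $t$-row, so I expect no real obstacle.
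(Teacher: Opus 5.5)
Your proof is correct, but it takes a different route from the paper. The paper argues by induction on $n$. It makes $u_{k+1}$ the outer variable and applies the induction hypothesis to the inner simplex $D^k_{T-u_{k+1}}$, which produces the factor $(T-u_{k+1})^{\sum_{j\le k}p_j}$. It then closes with the one-dimensional Beta integral $\int_0^T u^{p_{k+1}-1}(T-u)^{\sum_{j\le k}p_j}\,\d u=T^{\sum_{j\le k+1}p_j}B\bigl(p_{k+1},\sum_{j\le k}p_j+1\bigr)$ and the identity $B(a,b)=\Gamma(a)\Gamma(b)/\Gamma(a+b)$. Your ``alternative check'' is close to this in spirit. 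However, integrating out $u_n$ innermost, as you phrase it, does not close as stated: it leaves an extra factor $(T-\sum_{j<n}u_j)^{p_n}$, so you would have to strengthen the statement to carry such a factor. The paper's ordering avoids that.

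Your main argument instead handles all $n$ at once. After scaling out $T$, a single $(n+1)$-dimensional change of variables in $\prod_j\Gamma(p_j)$, combined with Tonelli, does the work. The dummy exponent $p_{n+1}=1$ produces exactly the $+1$ in $\Gamma(\sum_j p_j+1)$, and your bijection and Jacobian $t^n$ are right.

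What your route buys:
\begin{itemize}
\item It is self-contained: it uses only the definition of $\Gamma$ and in effect reproves the Beta--Gamma relation in multivariate form.
\item Finiteness comes for free, since all integrands are nonnegative.
\item The $T$-dependence is isolated cleanly. This is precisely where the paper's write-up slips: its base case reads $\int_0^T u_1^{p_1-1}\,\d u_1=1/p_1$ instead of $T^{p_1}/p_1$, and the inner domain is written $D^k_{1-u_{k+1}}$. Your reading of $D^n_T$ as $\{\sum_j u_j<T\}$, rather than the $<1$ printed in Lemma~\ref{simplex}, is the correct one.
\end{itemize}
The paper's induction, in turn, needs no multivariate Jacobian and is shorter if the Beta--Gamma identity is taken as known.
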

\begin{proof}
	First for $n=1$, 
	\[\int_{0}^{T}u_1^{p_1-1}\d u_1=\frac{1}{p_1}=T^{p_1}\frac{\Gamma(p_1)}{\Gamma(p_1+1)}.\]
	If for $n=k(k\ge1)$, there exists
	\[\int_{D^k_T}\prod_{j=1}^{k}u_j^{p_j-1}\d u_1\d u_2\cdots \d u_k=T^{\sum_{j=1}^{k}p_j}\frac{\prod_{j=1}^k\Gamma(p_j)}{\Gamma(\sum_{j=1}^{k}p_j+1)},\]
	then for $n=k+1$, 
	\begin{align*}
		\int_{D^{k+1}_T}\prod_{j=1}^{k+1}u_j^{p_j-1}\d u_1\d u_2\cdots \d u_{k+1}=&\int_{0}^{T}u_{k+1}^{p_{k+1}-1}\Big(\int_{D^{k}_{1-u_{k+1}}}\prod_{j=1}^{k}u_j^{p_j-1}\d u_1\d u_2\cdots \d u_k\Big)\d u_{k+1}\\
		=&\int_{0}^{T}u_{k+1}^{p_{k+1}-1}(T-u_{k+1})^{\sum_{j=1}^{k}p_j}\frac{\prod_{j=1}^k\Gamma(p_j)}{\Gamma(\sum_{j=1}^{k}p_j+1)}\d u_{k+1}\\
		%=&T^{\sum_{j=1}^{k+1}p_j}\frac{\prod_{j=1}^k\Gamma(p_j)}{\Gamma(\sum_{j=1}^{k}p_j+1)}\int_{0}^{1}u_{k+1}^{p_{k+1}-1}(1-u_{k+1})^{\sum_{j=1}^{k}p_j}du_{k+1}\\
		=&T^{\sum_{j=1}^{k+1}p_j}\frac{\prod_{j=1}^k\Gamma(p_j)}{\Gamma(\sum_{j=1}^{k}p_j+1)}B(p_{k+1},\sum_{j=1}^{k}p_j+1)\\
		%=&T^{\sum_{j=1}^{k+1}p_j}\frac{\prod_{j=1}^k\Gamma(p_j)}{\Gamma(\sum_{j=1}^{k}p_j+1)}\frac{\Gamma(p_{k+1})\Gamma(\sum_{j=1}^{k}p_j+1)}{\Gamma(\sum_{j=1}^{k+1}p_j+1)}\\
		=&T^{\sum_{j=1}^{k+1}p_j}\frac{\prod_{j=1}^{k+1}\Gamma(p_j)}{\Gamma(\sum_{j=1}^{k+1}p_j+1)},
	\end{align*}
	which means that \eqref{dirichlet} exists for any $n\ge1$.
\end{proof}

%\section{Appendix}
\section{The proof of Proposition \ref{Tauberian}}
To establish the Tauberian link between the Laplace transform and the small ball probability, 
we  use the  Weierstrass approximation theorem to approximate the indicator function by polynomials. The details proof is as follows.

\begin{proof}[The proof of Proposition \ref{Tauberian}]
	Consider a function $g:[0,1]\to\R$ defined by 
	\[g(x)=\begin{cases}
		0, & x\in[0,e^{-1}) \\	1, & x\in[e^{-1},1]
	\end{cases}.\]
	Because $\P (X\le\frac{1}{\lambda})=\E g(e^{-\lambda X})$, it suffices to prove $$\lim\limits_{\lambda\to\infty}\lambda^{\alpha}\E g(e^{-\lambda X})=\frac{A}{\Gamma(1+\alpha)}.$$
	From \eqref{tauberian}, $$\lim\limits_{\lambda\to+\infty}\lambda^{\alpha}Ee^{-\lambda k X}=Ak^{-\alpha}=\frac{A}{\Gamma(\alpha)}\int_0^{\infty}v^{\alpha-1}e^{-kv}\d v=\frac{A}{\Gamma(1+\alpha)}\int_0^{\infty}e^{-kv^{1/{\alpha}}}\d v$$
	for any integer $k\ge1$, so that given an integer $m\ge1$ and a polynomial $$H(x)=\sum\limits_{k=1}^{m}a_kx^k,$$ there exists 
	\begin{align}\label{polynomial}
		\lim\limits_{\lambda\to+\infty}\lambda^{\alpha}\E H(e^{-\lambda X})=\frac{A}{\Gamma(1+\alpha)}\int_0^{\infty}H\big(e^{-v^{1/\alpha}}\big)\d v.
	\end{align}
	For any $\varepsilon\in(0,e^{-1})$, let 
	\begin{align}
		h_1(x)=\begin{cases}
			0, & x\in[0,e^{-1}-\varepsilon]\\
			\varepsilon^{-1}(x-e^{-1})+1, & x\in(e^{-1}-\varepsilon,e^{-1})\\1, & x\in[e^{-1},1]
		\end{cases}.
	\end{align}
	Because $\frac{h_1(x)}{x}+\varepsilon$ is continuous on $[0,1]$,
	using Weierstrass's approximation theorem, there exists a polynomial $p_1(x)$, s.t. 
	\[\Big|p_1(x)-\varepsilon-\frac{h_1(x)}{x}\Big|<\varepsilon\] and then
	\[\frac{h_1(x)}{x}<p_1(x)<2\varepsilon+\frac{h_1(x)}{x}\]
	for any $x\in[0,1]$. So that 
	\begin{align*}
		\lambda^{\alpha}\E g(e^{-\lambda X})\le\lambda^{\alpha}\E h_1(e^{-\lambda X})\le\lambda^{\alpha}\E \Big(e^{-\lambda X}p_1(e^{-\lambda X})\Big)
	\end{align*}
	and then 
	\begin{align*}
		\limsup_{\lambda\to+\infty}\lambda^{\alpha}\E g(e^{-\lambda X})&\le\frac{A}{\Gamma(1+\alpha)}\int_0^{\infty}e^{-v^{1/\alpha}}p_1\big(e^{-v^{1/\alpha}}\big)\d v\\&\le\frac{A}{\Gamma(1+\alpha)}\Big(\int_0^{\infty}h_1\big(e^{-v^{1/\alpha}}\big)\d v+2\varepsilon\int_0^{\infty}e^{-v^{1/\alpha}}\d v\Big)\\&=\frac{A}{\Gamma(1+\alpha)}\Big(1+\int_{1}^{\big(\ln(e^{-1}-\varepsilon)^{-1}\big)^{\alpha}}h_1\big(e^{-v^{1/\alpha}}\big)\d v+2\varepsilon\Gamma(1+\alpha)\Big)\\&\le\frac{A}{\Gamma(1+\alpha)}\Big(\big(\ln(e^{-1}-\varepsilon)^{-1}\big)^{\alpha}+2\varepsilon\Gamma(1+\alpha)\Big),
	\end{align*}
	where the equality comes from the fact that $$\int_0^{\infty}g\big(e^{-v^{1/\alpha}}\big)\d v=\int_0^{1}1\d v=1.$$ Letting $\varepsilon\downarrow0$, the inequality becomes 
	\[\limsup_{\lambda\to+\infty}\lambda^{\alpha}\E g(e^{-\lambda X})\le\frac{A}{\Gamma(1+\alpha)}.\]
	For any $\varepsilon\in(0,1-e^{-1})$, let 
	\begin{align}
		h_2(x)=\begin{cases}
			0, & x\in[0,e^{-1}]\\
			\varepsilon^{-1}(x-e^{-1}), & x\in(e^{-1},e^{-1}+\varepsilon)\\1, & x\in[e^{-1}+\varepsilon,1]
		\end{cases}
	\end{align}
	and there exists a polynomial $p_2(x)$ satisfying 
	\[\frac{h_2(x)}{x}-2\varepsilon<p_2(x)<\frac{h_2(x)}{x}\] 
	using Weierstrass's approximation theorem. Then the inequality  \[\liminf_{\lambda\to+\infty}\lambda^{\alpha}\E g(e^{-\lambda X})\ge\frac{A}{\Gamma(1+\alpha)}\]
	can be obtained similarly, which completes the proof.
\end{proof}

\bigskip
\subsection*{Acknowledgements}
The authors would like to thank Professor Fangjun Xu for for his valuable comments and suggestions on an earlier version of this manuscript, which significantly improved the quality of this work.

\subsection*{Funding}
Qian Yu is supported by National Natural Science Foundation of China (12201294). Minhao Hong was supported by the National Social Science Fund of China (25BTJ047).

\subsection*{Data Availability Statements} Data sharing not applicable to this article as no datasets were generated or analysed during the current study.

\subsection*{Declaration of interests} The authors declare that they have no known competing financial interests or personal relationships that
could have appeared to influence the work reported in this paper.

\end{document}